\newtheorem{teore}{Theorem}[section]
\newtheorem{defn}[teore]{Definition}
\newtheorem{lemat}[teore]{Lemma}
\newtheorem{coro}[teore]{Corollary}
\newtheorem{prop}[teore]{Proposition}
\begin{document}
\DeclarePairedDelimiter\ceil{\lceil}{\rceil}
\DeclarePairedDelimiter\floor{\lfloor}{\rfloor}
\title{Independence and Matching Numbers of Unicyclic Graphs from Null Space}
\subjclass{05C50, 15A18}
\keywords{ Unicyclic, null space, nullity, support,
independence number, matching number.}
\author[L. E. Allem]{L. Emilio Allem}\email{emilio.allem@ufrgs.br}
\address{UFRGS - Universidade Federal do Rio Grande do Sul, Instituto de Matem\'atica, Porto Alegre, Brazil}
\author[D. A. Jaume]{Daniel A. Jaume}\email{djaume@unsl.edu.ar}
\address{Universidad Nacional de San Luis, Departamento de Matem\'aticas, San Luis, Argentina}
\author[G. Molina]{Gonzalo Molina}\email{lgmolina@unsl.edu.ar}
\address{Universidad Nacional de San Luis, Departamento de Matem\'aticas, San Luis, Argentina}
\author[M. M. Toledo]{Maikon M. Toledo}\email{maikon.toledo@ufrgs.br}
\address{UFRGS - Universidade Federal do Rio Grande do Sul, Instituto de Matem\'atica, Porto Alegre, Brazil}
\author[V. Trevisan]{Vilmar Trevisan}\email{trevisan@mat.ufrgs.br }
 \address{UFRGS - Universidade Federal do Rio Grande do Sul, Instituto de Matem\'atica, Porto Alegre, Brazil}

\maketitle

\begin{abstract}
We characterize unicyclic graphs that are singular using the support of the
null space of their pendant trees.  From this, we obtain closed formulas
for the independence and matching numbers of a unicyclic graph, based on
the support of its subtrees. These formulas allows one to compute
independence and matching numbers of unicyclic graphs using linear algebra
methods.

\end{abstract}

\section{Introduction}

Recently, in \cite{tree}, the authors studied the null space of the adjacency matrix of trees and they presented a null decomposition of trees. In general, this null decomposition divides a tree into two forests (one of the forests can be empty), one composed by singular trees and the other composed by non-singular trees.
The technique used was the analysis of the \emph{support} of the tree, where the \emph{support} is defined as the subset of vertices for which at least one of its corresponding coordinates of the eigenvectors of the null space of the adjacency matrix is nonzero.

As an application, in \cite{tree}, the null decomposition was used to obtain closed formulas for two classical parameters.  The first one is the independence number of a graph $G$, denoted by $\alpha(G)$. Notice that the problem of computing $\alpha(G)$ is $NP$-hard \cite{10007212609} and several mathematicians have studied $\alpha(G)$ ( for example \cite{Alon:1998:AIN:286563.286564,Frieze:1990:INR:82922.82935,SHEARER198383}). The second one is the matching number of a graph $G$, denoted by $\nu(G)$ \cite{doob,ming}. Historically the matching theory started with bipartite graphs and one of the earliest works was published in 1916 \cite{konig}.

In this paper, we extend the results of \cite{tree} to unicyclic graphs. In a more general sense,  we obtain structural information of the unicyclic graphs using the support of their subtrees. In particular, we obtain closed formulas for the independence and matching numbers of unicyclic graphs that depend on the support and the core of their subtrees. Next we give an outline of this paper. It is worth pointing out that, in practice, this means that these classical parameters can be computed using linear algebra.

In section \ref{sec:bas}, we present some basic notations and definitions of support of a graph. In section \ref{sec:sing}, we characterize singular unicyclic graphs using the support of their pendant trees.
In section \ref{sec:alpha}, we obtain a closed formula for the independence number of unicyclic graphs using the support of the subtrees of these unicyclic graphs.
In section \ref{sec:nu}, we obtain closed formulas for the matching number of unicyclic graphs based on the  support of subtrees of these unicyclic graphs.

\section{Basic definitions and notation}\label{sec:bas}

In this section we present some notation and basic definitions. In particular, we explain the notion of support of a graph. We use the graphs of Figure \ref{f5} to illustrate the concepts used here.

Let $G=(V,E)$ be a simple graph of order $n$, with vertex set $V=\lbrace{v_1,\ldots,v_n}\rbrace$ and edge set $E=E(G)$, the adjacency matrix $A(G)=(a_{ij})_{n\times{n}}$ of $G$ is defined as
$$a_{ij}=\begin{cases}
1, & \mbox{if }\lbrace{v_i,v_j}\rbrace\in{E};\\
0, & \mbox{if }\lbrace{v_i,v_j}\rbrace\notin{E}.
\end{cases}$$

Denote by $\varepsilon_{\lambda}$ the $\lambda$-eigenspace of $A(G)$, thus $\varepsilon_{\lambda}=\lbrace{x\in{\mathbb{R}^n}: A(G)x=\lambda{x}}\rbrace$. The  $0$-eigenspace  ($\varepsilon_0$) is the focus of our work and will be denote by $\mathcal{N}(G)$.
The \emph{nullity of a graph $G$}, denoted by $\eta(G)$, is the multiplicity of the eigenvalue zero in the spectrum of $A(G)$ or, equivalently, the dimension of the $0$-eigenspace of $G$. The graph $G$ is called singular if $A(G)$ is a singular matrix or $\eta(G)>0.$ Otherwise, the graph $G$ is called non-singular.

As an example, we observe that the set
\noindent $S=\lbrace{(0,1,0,-1,0,0)^{t},(0,0,1,-1,0,0)^{t}}\rbrace$ is a basis for the null space of the tree $T_1$ of Figure \ref{f5}, hence $\eta(T_1) = 2$. And we notice that 0 is not an eigenvalue of the tree $T_2$ of Figure \ref{f5}, so $\eta(T_2)= 0$.

\begin{defn}
A set $I\subset{V} $ of vertices of a graph $G$ is an independent set in $G$ if no two vertices in $I$ are adjacent. A maximum independent set is an independent set of maximum cardinality.
The cardinality of any maximum independent set in $G$, denoted by $\alpha(G)$, is called the \emph{independence number of $G$}.
$\mathcal{I}(G)$ denotes the set of all maximum independent sets of $G$.
\end{defn}

For example, in Figure \ref{f5} the vertex subsets
$\lbrace{v_2,v_3,v_4,v_5}\rbrace$ and $\lbrace{v_2,v_3,v_4,v_6}\rbrace$  of
the tree $T_1$ are the only independent sets of maximum cardinality.
Therefore,
$$\mathcal{I}(G)=\lbrace{\lbrace{v_2,v_3,v_4,v_5}\rbrace,\lbrace{v_2,v_3,v_4,v_6}
\rbrace}\rbrace,$$ and
$\alpha(T_1)=\vert\lbrace{v_2,v_3,v_4,v_5}\rbrace\vert=4$.

\begin{defn}
A matching $M$ in $G$ is a set of pairwise non-adjacent edges, that is, no
two edges in $M$ share a common vertex. A maximum matching is a matching of
largest cardinality in $G$. The matching number of $G$, denoted by $\nu(G)$,
is the size of a set of any maximum matching. $\mathcal{M}(G)$ denotes the
set of all maximum matching of $G$. A vertex is saturated by $M$, if it is an
endpoint of one of the edges in the matching $M$. Otherwise the vertex is
said non-saturated. Moreover, a matching is said to be perfect if it
saturates all vertices of $G$.
\end{defn}

In the figures, we use zig zag edges to represent the edges of a matching.

\noindent In Figure \ref{f5}, the tree $T_1$ has matching
$\lbrace{\lbrace{v_1,v_3}\rbrace,\lbrace{v_5,v_6}\rbrace}\rbrace$ and the
tree $T_2$ has perfect matching
$\lbrace{\lbrace{v_7,v_9}\rbrace,\lbrace{v_8,v_{10}}\rbrace,\lbrace{v_{11},v_{14}}
\rbrace,\lbrace{v_{12},v_{13}}\rbrace}\rbrace$. Therefore, $\nu(T_1)=2$ and
$\nu(T_2)=4$.

The Edmond-Gallai vertices of $G$, denoted by $EG(G)$, is the set of all
vertices of $G$ that are non-saturated by some maximum matching $M$ in $G$.

\noindent For example, the maximum matchings of $T_1$ are $$\mathcal{M}(T_1)=\lbrace{\lbrace{\lbrace{v_1,v_2}\rbrace,\lbrace{v_5,v_6}\rbrace}\rbrace, \lbrace{\lbrace{v_1,v_3}\rbrace,\lbrace{v_5,v_6}\rbrace}\rbrace,\lbrace{\lbrace{v_1,v_4}\rbrace,\lbrace{v_5,v_6}\rbrace}\rbrace}\rbrace.$$ Thus $EG(T_1)=\lbrace{v_2,v_3,v_4}\rbrace$ and $EG(T_2)=\emptyset.$
\usetikzlibrary{shapes,snakes}
\tikzstyle{vertex}=[circle,draw,minimum size=5pt,inner sep=10pt]
\tikzstyle{edge} = [draw,thick,-]
\tikzstyle{matched edge} = [draw,snake=zigzag,line width=1pt,-]
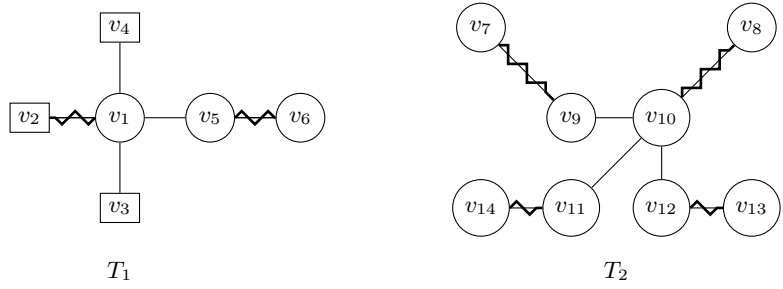
\begin{figure}[h!]
\begin{scriptsize}
\begin{center}
\begin{tikzpicture}[scale=1.2,auto,swap]
\node[draw,circle,label=below left:] (1) at (0,0) {$v_{1}$};
\node[draw,rectangle,label=below left:] (2) at (-1,0) {$v_{2}$};
\node[draw,rectangle,label=below left:] (3) at (0,-1) {$v_3$};
\node[draw,rectangle,label=below left:] (4) at (0,1) {$v_{4}$};
\node[draw,circle,label=below left:] (5) at (1,0) {$v_{5}$};
\node[draw,circle,label=below left:] (6) at (2,0) {$v_{6}$};
\node[draw,circle,label=below left:] (7) at (4,1) {$v_{7}$};
\node[draw,circle,label=below left:] (9) at (5,0) {$v_{9}$};
\node[draw,circle,label=below left:] (10) at (6,0) {$v_{10}$};
\node[draw,circle,label=below left:] (8) at (7,1) {$v_{8}$};
\node[draw,circle,label=below left:] (14) at (4,-1) {$v_{14}$};
\node[draw,circle,label=below left:] (11) at (5,-1) {$v_{11}$};
\node[draw,circle,label=below left:] (12) at (6,-1) {$v_{12}$};
\node[draw,circle,label=below left:] (13) at (7,-1) {$v_{13}$};
\node at (0,-1.7) {$T_1$};
\node at (5.5,-1.7) {$T_2$};
\foreach \from/\to in {1/2,1/3,1/4,1/5,5/6,9/10,10/12,11/10,7/9,10/8,12/13,14/11,1/2,5/6}{
 \draw (\from) -- (\to);}
 \foreach \source / \dest in {7/9,10/8,12/13,14/11,1/2,5/6}
   \path[matched edge] (\source) -- (\dest);
\end{tikzpicture}
\end{center}
\end{scriptsize}
 \caption{Support, matching and independent set.}
\label{f5}
\end{figure}

\begin{defn} Let $G$ be a graph with $n$ vertices and let $x$ be a vector of $\mathbb{R}^{n}$. The support of $x$ in $G$ is
$$Supp_{G}(x)=\lbrace{v\in{V(G)}:x_{v}\neq{0}}\rbrace.$$

\noindent Let $S$ be a subset of $\mathbb{R}^{n}$. Then the support of $S$ in
 $G$ is $$Supp_{G}(S)=\bigcup_{x\in{S}}{Supp_{G}(x)}.$$\end{defn}

As a convention, we use rectangular vertices in figures to represent
vertices of the support. Consider the tree $T_1$ (Figure \ref{f5}) and the
set of vectors

$S=\lbrace{(0,1,0,-1,0,0)^{t},(0,0,1,-1,0,0)^{t}}\rbrace.$ Then
$Supp_{T_1}(S)=\lbrace{v_2,v_3,v_4}\rbrace.$

The following result shows that in order to compute the support of an eigenspace of $A(G)$, it is enough to analyse the coordinates of the vectors of a basis of this eigenspace.
\begin{lemat}\cite{tree}\label{suppcompute}
Let $G$ be a graph, and $\lambda$ an eigenvalue of $A(G)$. Let $\mathcal{B}=\lbrace{b_1,\ldots,b_k}\rbrace$ be a basis of $\varepsilon_{\lambda}$, then $Supp_G(\varepsilon_{\lambda})=Supp_G(\mathcal{B}).$
\end{lemat}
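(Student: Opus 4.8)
The plan is to prove the two set inclusions separately, both relying only on the fact that $\mathcal{B}$ spans $\varepsilon_{\lambda}$. At its heart this is a routine linear-algebra observation: forming a linear combination of vectors can annihilate a nonzero coordinate through cancellation, but it can never produce a nonzero coordinate at a position where every combined vector already vanishes. The two inclusions correspond exactly to these two halves of that observation.

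First I would establish the easy inclusion $Supp_G(\mathcal{B})\subseteq Supp_G(\varepsilon_{\lambda})$. This is immediate from the definition of the support of a set: each basis vector $b_i$ lies in $\varepsilon_{\lambda}$, so $Supp_G(b_i)\subseteq\bigcup_{x\in\varepsilon_{\lambda}}Supp_G(x)=Supp_G(\varepsilon_{\lambda})$, and taking the union over $i=1,\ldots,k$ preserves the inclusion. No further work is needed here.

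The reverse inclusion $Supp_G(\varepsilon_{\lambda})\subseteq Supp_G(\mathcal{B})$ is the substantive direction, and I would argue it by contraposition. Fix a vertex $v\notin Supp_G(\mathcal{B})$; by definition of $Supp_G(\mathcal{B})$ this means $(b_i)_v=0$ for every $i\in\{1,\ldots,k\}$. Now take an arbitrary $x\in\varepsilon_{\lambda}$ and expand it in the basis as $x=\sum_{i=1}^{k}c_i\,b_i$. Reading off the $v$-coordinate gives $x_v=\sum_{i=1}^{k}c_i\,(b_i)_v=0$. Since $x$ was arbitrary, no eigenvector of $\varepsilon_{\lambda}$ is nonzero at $v$, whence $v\notin Supp_G(\varepsilon_{\lambda})$. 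Combining this with the first inclusion yields the claimed equality $Supp_G(\varepsilon_{\lambda})=Supp_G(\mathcal{B})$.

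I do not anticipate a genuine obstacle. The only point requiring a little care is to phrase the second inclusion contrapositively (equivalently, to expand an arbitrary eigenvector in the basis $\mathcal{B}$ and evaluate a single coordinate), because a direct forward argument would have to exhibit a specific basis vector that is nonzero at $v$, and it is precisely the coordinate expansion $x_v=\sum_i c_i (b_i)_v$ that delivers such a vector.
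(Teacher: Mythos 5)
Your proof is correct and complete. The paper itself states this lemma without proof, importing it from \cite{tree}; your two-inclusion argument --- the trivial inclusion $Supp_G(\mathcal{B})\subseteq Supp_G(\varepsilon_{\lambda})$ since each $b_i\in\varepsilon_{\lambda}$, and the reverse by expanding an arbitrary $x\in\varepsilon_{\lambda}$ as $x=\sum_{i=1}^{k}c_i b_i$ and reading off the $v$-coordinate --- is exactly the standard argument for this fact, and indeed, as your reasoning shows, it uses only that $\mathcal{B}$ spans $\varepsilon_{\lambda}$ (neither linear independence nor the eigenvector structure is needed).
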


We are interested in the support of the null space of $A(G)$,
that is, our focus is $Supp_G(\mathcal{N}(G))$, which, for purposes of
notation, is denoted by $Supp(G)$. In practice to compute $Supp(G)$ we will
use Lemma \ref{suppcompute} and not the definition of support. That is, we
compute a basis of the null space and consider the entries of the vectors in
the basis to obtain the support.

For example, notice that $S=\lbrace{(0,1,0,-1,0,0)^{t},(0,0,1,-1,0,0)^{t}}\rbrace$  is a basis of $\mathcal{N}(T_1)$ (Figure \ref{f5}), thus $$Supp(T_1)=Supp_{T_1}(S)=\lbrace{v_2,v_3,v_4}\rbrace.$$
Moreover, note that $T_2$ (Figure \ref{f5}) is non-singular, that is, $\mathcal{N}(T_2)=\lbrace{0}\rbrace$, therefore, $Supp(T_2)=\emptyset$.

\begin{teore}\cite{tree}\label{tindependent}
Let $T$ be a tree, then $Supp(T)$ is an independent set of $T$.
\end{teore}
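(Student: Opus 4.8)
The plan is to prove the statement by induction on the number of vertices, after strengthening it to the assertion that $Supp(F)$ is independent for \emph{every} forest $F$. This implies the theorem at once, since a tree is a forest, and it is the right level of generality: the reduction I use below turns a tree into a (possibly disconnected) forest, so an induction phrased only for trees would not close. For the base case I would take a forest with no edges; there $A(F)=0$, hence $\mathcal{N}(F)=\mathbb{R}^{n}$ and $Supp(F)=V(F)$, which is trivially independent because there are no edges.

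For the inductive step, let $F$ have at least one edge and pick any leaf $\ell$ of $F$ (one exists, since a component containing an edge is a tree on at least two vertices) with unique neighbor $w$, and set $F'=F-\{\ell,w\}$, a forest on fewer vertices. The engine of the proof is two observations about $x\in\mathcal{N}(F)$. First, writing the equation $A(F)x=0$ at the leaf $\ell$ gives $x_{w}=0$ for every null vector $x$, so $w\notin Supp(F)$. Second --- and this is the point where deleting the neighbor $w$, and not merely $\ell$, is essential --- once $x_{w}=0$ is known, for each vertex $z\in V(F')$ the equation of $A(F)x=0$ at $z$ coincides with the equation of $A(F')\bigl(x|_{V(F')}\bigr)=0$ at $z$, because the only neighbor of $z$ possibly lost in passing from $F$ to $F'$ is $w$ (whose coordinate vanishes), while $\ell$ is never a neighbor of $z$. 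Hence $x|_{V(F')}\in\mathcal{N}(F')$, which yields $Supp(F)\cap V(F')\subseteq Supp(F')$ and therefore $Supp(F)\subseteq Supp(F')\cup\{\ell\}$.

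To conclude, I would invoke the induction hypothesis that $Supp(F')$ is independent in $F'$; since $F'$ is an \emph{induced} subgraph of $F$, no two of its support vertices are adjacent in $F$ either. It then remains to rule out an edge incident to $\ell$ inside $Supp(F)$. But the only neighbor of $\ell$ is $w$, and $w\notin Supp(F)$ by the first observation, so $\ell$ is nonadjacent to every vertex of $Supp(F)$. Combining the two facts shows that no two vertices of $Supp(F)$ are adjacent, completing the induction.

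The step I expect to be the main obstacle is the second observation: recognizing that one must remove both $\ell$ and $w$ so that null vectors of $F$ restrict \emph{exactly} to null vectors of $F'$, and checking carefully that the defining equations at the interior vertices are genuinely unaffected once the single forced value $x_{w}=0$ is available. Everything else is bookkeeping about induced subgraphs and about how $V(F)$ decomposes as $V(F')\sqcup\{\ell,w\}$.
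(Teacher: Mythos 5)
Your proof is correct. The two key observations both check out: the equation of $A(F)x=0$ at the leaf $\ell$ forces $x_w=0$ for every null vector, so $w\notin Supp(F)$; and once $x_w=0$ is known, the restriction $x|_{V(F')}$ satisfies the null space equations of $F'=F-\{\ell,w\}$ exactly, since no vertex of $F'$ is adjacent to $\ell$ and the only possibly lost neighbor is $w$, whose coordinate vanishes. This gives $Supp(F)\subseteq Supp(F')\cup\{\ell\}$, and independence follows from the induction hypothesis (legitimately applied to $F'$ because $F'$ is an induced subgraph) together with the fact that $\ell$'s unique neighbor $w$ lies outside the support. Strengthening the statement from trees to forests is precisely what makes the reduction close, as you note. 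Be aware, though, that the paper itself offers no proof to compare against: Theorem \ref{tindependent} is imported verbatim from \cite{tree}, where it is established inside the null decomposition machinery that ties the null space of a tree to its maximum matchings --- essentially the identification of $Supp(T)$ with the Edmonds--Gallai vertices, which reappears in this paper as Lemma \ref{t11}. So your route is genuinely different: it is purely linear-algebraic, self-contained, needs no matching theory at all, and proves the statement in the greater generality of forests; the matching-theoretic route of \cite{tree} is heavier, but it produces along the way the structural byproducts (the $S$-forest/$N$-forest decomposition and the formulas of Lemma \ref{t35}) on which the rest of this paper depends. One small polish: phrase the induction explicitly as strong induction on $|V(F)|$ over all forests, with every edgeless forest as the base case, so that the deletion of two vertices always lands in a case already covered --- bookkeeping, not a gap.
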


The following well known result characterizes singular trees in terms of
their matchings.

\begin{lemat}\label{c2}\cite{singulartree}
$T$ is a nonsingular tree if and only if $T$ has a perfect matching.
\end{lemat}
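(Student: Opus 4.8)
The plan is to prove the statement by induction on the order $n$ of the tree, generalizing the claim to forests so that the inductive deletion stays inside the class under consideration; since a forest is nonsingular (respectively, has a perfect matching) if and only if each of its components is, the forest version contains the tree version. The engine of the induction is the removal of a pendant edge: if $v$ is a leaf of $T$ with unique neighbour $u$, I would delete both $v$ and $u$ and pass to the forest $T' = T - u - v$ on $n-2$ vertices.

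First I would record two reduction identities. For the nullity, write $A(T)x = 0$ for a null vector $x$. The row indexed by the leaf $v$ reads $x_u = 0$, so every null vector of $T$ vanishes at $u$; feeding $x_u = 0$ into the remaining rows shows that the restriction $x \mapsto x|_{V(T')}$ sends $\mathcal{N}(T)$ into $\mathcal{N}(T')$, while the assignment $x_u := 0$ and $x_v := -\sum_{w \sim u,\, w \neq v} x_w$ extends any $y \in \mathcal{N}(T')$ back to a null vector of $T$. These two linear maps are mutually inverse, so $\mathcal{N}(T) \cong \mathcal{N}(T')$ and hence $\eta(T) = \eta(T')$; in particular $T$ is nonsingular exactly when $T'$ is. For the matching side, a standard exchange argument shows that some maximum matching of $T$ uses the pendant edge $uv$: if a maximum matching misses $uv$, then either $u$ is free, forcing a larger matching and contradicting maximality, or $u$ is matched to some $w \neq v$ and one may swap $uw$ for $uv$. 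Hence $\nu(T) = \nu(T') + 1$, and since $|V(T')| = n-2$ this gives that $T$ has a perfect matching if and only if $\nu(T) = n/2$, equivalently $\nu(T') = |V(T')|/2$, equivalently $T'$ has a perfect matching.

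With these identities the induction closes quickly. For the base cases, the empty forest and a single vertex are immediate: a single vertex is singular via the null vector $e_v$ and has no perfect matching, so the equivalence holds. In the inductive step, if the forest has an isolated vertex it is singular (again $e_v$ is a null vector) and has no perfect matching, so both sides fail; otherwise every component carrying an edge contains a leaf, and choosing such a leaf $v$ with neighbour $u$ produces $T' = T - u - v$ on $n-2$ vertices. Chaining the two reductions with the induction hypothesis applied to $T'$ yields $T \text{ nonsingular} \iff T' \text{ nonsingular} \iff T' \text{ has a perfect matching} \iff T \text{ has a perfect matching}$, completing the argument.

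I expect the main obstacle to be the nullity-preservation step rather than the matching bookkeeping: one must verify that the proposed restriction and extension maps are genuinely inverse linear isomorphisms between $\mathcal{N}(T)$ and $\mathcal{N}(T')$, and in particular handle the fact that deleting the internal vertex $u$ may disconnect $T$, which is precisely why the statement must be phrased for forests from the outset. An alternative route that sidesteps the explicit isomorphism is the pendant-vertex recursion $\phi(T,x) = x\,\phi(T-v,x) - \phi(T-u-v,x)$ for characteristic polynomials; evaluating at $x=0$ gives $\phi(T,0) = -\phi(T',0)$, so $0$ is a root for $T$ exactly when it is for $T'$. I would nonetheless keep the inductive argument above as the primary proof, since it exposes the null space directly, which is the central object of this paper.
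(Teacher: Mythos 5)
Your proof is correct, but there is nothing in the paper to compare it against: the paper imports this lemma from Bapat \cite{singulartree} without proof, so the relevant comparison is with the standard reference argument. That argument is non-inductive: in the Leibniz expansion of $\det A(F)$ for a forest $F$, the nonvanishing terms correspond to permutations whose cycles are $2$-cycles (edges of $F$) or longer graph cycles covering all vertices; since a forest has no cycles, the surviving terms are exactly the perfect matchings, and a tree has at most one, whence $\det A(T)=(-1)^{n/2}$ if a perfect matching exists and $0$ otherwise. Your route instead deletes a pendant edge $uv$ and chains the identities $\eta(F)=\eta(F-u-v)$ and $\nu(F)=\nu(F-u-v)+1$ through an induction on forests. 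Both identities check out: the restriction and extension maps are genuinely mutually inverse (row $v$ of $A(T)x=0$ forces $x_u=0$, and row $u$ recovers $x_v=-\sum_{w\sim u,\,w\neq v}x_w$, which is exactly your extension formula), the pendant-edge exchange argument is sound, the isolated-vertex and trivial base cases are disposed of correctly, and generalizing to forests is precisely what makes the step $T\mapsto T-u-v$ legitimate when $u$ is a cut vertex. The trade-off between the two proofs: the determinant expansion yields the sharper quantitative fact $\det A(T)\in\{0,1,-1\}$ (together with uniqueness of the perfect matching in a tree), while your induction is more elementary and exposes the null space directly -- and the intermediate fact $\eta(T)=\eta(T-u-v)$ is itself a workhorse lemma in the nullity literature, well aligned with how this paper uses $\mathcal{N}(G)$. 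Your fallback via the characteristic polynomial recursion $\phi(T,x)=x\,\phi(T-v,x)-\phi(T-u-v,x)$ evaluated at $x=0$ is also a valid shortcut for the nullity half.
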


As we can see in Figure \ref{f5}, the tree $T_1$ is a singular tree, because
it does not have perfect matching. The tree $T_2$ is a non-singular tree,
because it has perfect matching.

Our first goal is to characterize singular unicyclic graphs in terms of the
support of their pendant trees, which is the subject of next section.

Using Lemma \ref{c2} above, and Lemma 3.12, Corollary 4.14 of \cite{tree} we obtain
the next result.

\begin{lemat}\label{t11}
Let $T$ be a tree, then $EG(T)=Supp(T)$.
\end{lemat}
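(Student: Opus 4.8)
The plan is to show that both $EG(T)$ and $Supp(T)$ coincide with the set of \emph{avoidable} vertices $\mathcal{U}(T)=\{v\in V(T):\nu(T-v)=\nu(T)\}$, i.e. the vertices whose deletion leaves the matching number unchanged; granting this, $EG(T)=\mathcal{U}(T)=Supp(T)$ is immediate. The identity $EG(T)=\mathcal{U}(T)$ is pure matching theory: if $v\in EG(T)$, a maximum matching $M$ missing $v$ is a matching of $T-v$ of size $\nu(T)$, so $\nu(T-v)\ge\nu(T)$, while $\nu(T-v)\le\nu(T)$ always; conversely a maximum matching of $T-v$ of size $\nu(T)$ is a maximum matching of $T$ missing $v$. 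Thus $EG(T)=\mathcal{U}(T)$.

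The substance is $Supp(T)=\mathcal{U}(T)$, and here I would pass from matchings to nullity through the forest identity $\eta(F)=|V(F)|-2\nu(F)$ (equivalently $\operatorname{rank}A(F)=2\nu(F)$), the quantitative strengthening of Lemma \ref{c2} that underlies the cited results of \cite{tree}. Applying it to $T$ and to the forest $T-v$ converts $\nu(T-v)=\nu(T)$ into $\eta(T-v)=\eta(T)-1$, so it suffices to prove that $v\in Supp(T)$ exactly when deleting $v$ drops the nullity by one. Two observations drive this. (i) Since $|V(T)|$ and $|V(T-v)|$ have opposite parity while $2\nu$ is even, $\eta(T)$ and $\eta(T-v)$ always have opposite parity; as Cauchy interlacing forces $|\eta(T-v)-\eta(T)|\le 1$, in fact $\eta(T-v)=\eta(T)\pm1$, never equal. (ii) The subspace $W=\{y\in\mathcal{N}(T):y_v=0\}$ restricts injectively (forget the $v$-coordinate) into $\mathcal{N}(T-v)$, because for $y_v=0$ each remaining null equation is unaffected, so $\eta(T-v)\ge\dim W$.

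If $v\notin Supp(T)$ then $W=\mathcal{N}(T)$, whence $\eta(T-v)\ge\eta(T)$, and by (i) $\eta(T-v)=\eta(T)+1$, so $v\notin\mathcal{U}(T)$. If $v\in Supp(T)$ then $\dim W=\eta(T)-1$, giving $\eta(T-v)\ge\eta(T)-1$; to exclude the alternative $\eta(T-v)=\eta(T)+1$ I would count dimensions through the functional $\phi:\mathcal{N}(T-v)\to\mathbb{R}$, $\phi(y)=\sum_{u\sim v}y_u$. Extending any $y\in\ker\phi$ by $0$ at $v$ yields a vector in $W$: the equations at vertices $w\neq v$ hold since $y$ is null on $T-v$ and the padded coordinate is $0$, and the equation at $v$ reads $\phi(y)=0$. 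This zero-padding is injective, so $\dim\ker\phi\le\dim W=\eta(T)-1$; since $\dim\ker\phi\ge\eta(T-v)-1$, we get $\eta(T-v)\le\eta(T)$, and by (i) $\eta(T-v)=\eta(T)-1$, i.e. $v\in\mathcal{U}(T)$. This closes $Supp(T)=\mathcal{U}(T)$.

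The main obstacle is the last step, namely ruling out that a support vertex is \emph{essential} (the interlacing ``Parter'' alternative $\eta(T-v)=\eta(T)+1$): the parity remark (i) together with the $\phi$-count settles it, but this is the place where the bookkeeping could silently go wrong, and it is presumably where Lemma 3.12 and Corollary 4.14 of \cite{tree} are invoked to the same effect. A secondary point to verify carefully is the exact forest identity $\eta(F)=|V(F)|-2\nu(F)$, which upgrades Lemma \ref{c2} from ``nullity zero'' to the precise count and is the bridge between the spectral and combinatorial descriptions; once it is in hand the argument reduces to the linear-algebra counting above, and one notes that Theorem \ref{tindependent} is not even needed for this particular statement.
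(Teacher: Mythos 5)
Your proof is correct, but it takes a genuinely different route from the paper, which offers no self-contained argument at all: Lemma \ref{t11} is obtained there by citing Lemma \ref{c2} together with Lemma 3.12 and Corollary 4.14 of \cite{tree}, i.e.\ by importing the null-decomposition machinery for trees developed in that reference. You instead reduce everything to the classical forest identity $\eta(F)=\vert{V(F)}\vert-2\nu(F)$ (due to Cvetkovi\'{c} and Gutman; provable by induction on a leaf edge) plus an elementary dimension count: the equality $EG(T)=\lbrace{v:\nu(T-v)=\nu(T)}\rbrace$ is immediate matching theory, the identity converts membership into $\eta(T-v)=\eta(T)-1$, and your padding/restriction correspondence between $W=\lbrace{y\in\mathcal{N}(T):y_v=0}\rbrace$ and $\ker\phi\subseteq\mathcal{N}(T-v)$ is in fact a pair of mutually inverse bijections, so $\dim W=\dim\ker\phi$ exactly; this pins $\eta(T-v)$ between $\dim W$ and $\dim W+1$, after which the parity of $\eta$ on forests decides each case --- note this makes your appeal to Cauchy interlacing redundant, since your two inequalities already give $\vert{\eta(T-v)-\eta(T)}\vert\le 1$ in both cases. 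What your route buys: a self-contained, purely linear-algebraic proof, valid verbatim for forests, and a conclusion sharper than the lemma itself, namely that supported vertices are exactly those whose deletion drops the nullity by one (the Parter alternative $\eta(T-v)=\eta(T)+1$ is excluded), while unsupported vertices always raise it by one. What the paper's route buys: brevity and consistency, since the surrounding results already lean on the structure theory of \cite{tree} (the $S$-forest/$N$-forest decomposition and Lemma \ref{t35}), so the citation costs nothing in context. The one ingredient you should prove or cite explicitly is $\eta(F)=\vert{V(F)}\vert-2\nu(F)$, which is genuinely stronger than Lemma \ref{c2} as stated in the paper; everything else in your argument is complete, and your remark that Theorem \ref{tindependent} is not needed for this statement is accurate.
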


It means that only the vertices of the support of a tree are not saturated by
some maximum matching in this tree.

\section{Singular Unicyclic Graphs}\label{sec:sing}

In this section we characterize singular unicyclic graphs using the support
of their pendant trees, which is the statement of Theorem \ref{c4}.

For cycles, the problem of characterizing singular graphs is solved.

\begin{lemat}\label{t1}\cite{ciclo}
A cycle $C_n$ of $n$ vertices is singular if and only if $n$ is divisible by $4$.
\end{lemat}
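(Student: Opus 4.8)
The plan is to compute the spectrum of $C_n$ directly, exploiting that its adjacency matrix is circulant, and then read off exactly when $0$ appears as an eigenvalue. First I would label the vertices $0,1,\dots,n-1$ with edges between consecutive indices modulo $n$, so that $A(C_n)$ acts by $(Ax)_j=x_{j-1}+x_{j+1}$ (indices mod $n$). Setting $\omega=e^{2\pi i/n}$ and $x_k=(1,\omega^k,\omega^{2k},\dots,\omega^{(n-1)k})^t$, a one-line check gives $A(C_n)x_k=(\omega^k+\omega^{-k})x_k$, so the eigenvalues are
$$\lambda_k=2\cos\!\left(\frac{2\pi k}{n}\right),\qquad k=0,1,\dots,n-1.$$

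With the spectrum in hand, $C_n$ is singular precisely when $0$ occurs among the $\lambda_k$, i.e. when $\cos(2\pi k/n)=0$ for some $k\in\{0,\dots,n-1\}$. The next step is an elementary number-theoretic observation: $\cos(2\pi k/n)=0$ forces $2\pi k/n=\tfrac{\pi}{2}+m\pi$ for some integer $m$, equivalently $4k=(2m+1)n$. Since the factor $2m+1$ on the right is odd while the left-hand side is divisible by $4$, this equation is solvable in integers only if $4\mid n$. Conversely, if $n=4t$ then the choice $k=t$ (which lies in the admissible range for $n\ge 2$) yields $\lambda_k=2\cos(\pi/2)=0$, so $0$ is indeed an eigenvalue. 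This establishes the desired equivalence.

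There is essentially no deep obstacle here; the only point demanding care is the divisibility step, where one uses that $2m+1$ is coprime to $4$ to conclude that $4\mid n$ is both necessary and sufficient for an admissible index $k$ to exist. As an alternative that fits the matching-based theme of this paper, I would note that the same conclusion drops out of the Sachs coefficient formula applied to the constant term $c_n$ of the characteristic polynomial, which is a signed sum over the spanning linear subgraphs (vertex-disjoint unions of edges and cycles) of $C_n$. The only such subgraphs are the whole cycle, always contributing $-2$, and, when $n$ is even, its two perfect matchings, each contributing $(-1)^{n/2}$. Hence $c_n=-2$ for odd $n$ and $c_n=-2+2(-1)^{n/2}$ for even $n$, and since $\det A(C_n)=(-1)^n c_n$ this determinant equals $2$ for odd $n$ and $2\bigl((-1)^{n/2}-1\bigr)$ for even $n$, vanishing exactly when $n/2$ is even, that is, when $4\mid n$.
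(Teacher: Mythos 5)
Your proof is correct, but note that the paper itself offers no proof of this lemma: it is quoted verbatim from the reference \cite{ciclo} and used as a black box in the proof of Theorem \ref{c4}. So there is no internal argument to compare against; what you have produced is a self-contained verification, and both of your routes work. The circulant computation is the standard one: the eigenvalue formula $\lambda_k=2\cos(2\pi k/n)$ is right, and the divisibility step $4k=(2m+1)n\Rightarrow 4\mid n$ is sound since $\gcd(2m+1,4)=1$. The only point you leave implicit is that the vectors $x_0,\dots,x_{n-1}$ are linearly independent (the DFT matrix is invertible), which is what guarantees the $\lambda_k$ exhaust the spectrum --- without this, the ``only if'' direction would not follow from exhibiting eigenvectors alone; it deserves one sentence. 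Your second argument via the Sachs coefficient formula is also correct ($\det A(C_n)=2$ for odd $n$ and $2\bigl((-1)^{n/2}-1\bigr)$ for even $n$, vanishing exactly when $4\mid n$), and it is arguably the more natural companion to this paper, since it derives singularity directly from the two perfect matchings of an even cycle and thus parallels the matching-based criterion of Lemma \ref{c2} for trees; the spectral route, by contrast, buys you the full spectrum and the explicit null space (e.g.\ the real and imaginary parts of $x_{n/4}$), which is more than the lemma needs but is in the spirit of the support computations used throughout.
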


Hence, for the remaining of this section, we will consider $G$ a unicyclic
graph $G\neq{C_n} $. Let $G$ be a unicyclic graph and let $C$ be the unique
cycle of $G$. For each vertex $v\in{V(C)}$, we denote by $G\lbrace{v}\rbrace$
the induced connected subgraph of $G$ with maximum possible number of
vertices, which contains the vertex $v$ and no other vertex of $C$.
$G\lbrace{v}\rbrace$ is called the \emph{pendant tree of $G$ at $v$}. Notice
that $G$ is obtained by identifying the vertex $v$ of $G\lbrace{v}\rbrace$
with the vertex $v$ on $C$ for all vertices $v\in{C}$. In Figure \ref{f8} we
have two unicyclic graphs $G$ and $H$ with their pendant trees
$G\lbrace{v_1}\rbrace$, $G\lbrace{v_2}\rbrace$, $G\lbrace{v_3}\rbrace$,
$H\lbrace{v_4}\rbrace$, $H\lbrace{v_5}\rbrace$ and $H\lbrace{v_6}\rbrace$,
respectively.

\tikzstyle{vertex}=[circle,draw,minimum size=2pt,inner sep=2pt]
\tikzstyle{edge} = [draw,thick,-]
\tikzstyle{matched edge} = [draw,line width=11pt,-]
\tikzstyle{same edge} = [draw,-]
\begin{figure}[h!]
\begin{scriptsize}
\begin{center}
\begin{tikzpicture}[scale=0.7,auto,swap]
\draw[dashed] (0.6,0.3) -- (3.4,1.5);
\draw[dashed] (0.2,0.65) --(0.7,0.65) (0.7,0.65) --(2.2,3.4);
\draw[dashed] (0.2,0.65) --(-1.3,3.4);
\draw[dashed] (-1.3,3.4) --(2.2,3.4);
\draw[dashed] (0.6,0.3) -- (0.6,-0.5);
\draw[dashed] (0.6,-0.5) --(3.4,-1.5);
\draw[dashed] (3.4,-1.5)--(3.4,1.5);
\draw[dashed] (0.35,0.3) -- (0.35,-0.5);
\draw[dashed] (0.35,-0.5)--(-3.7,-0.5);
\draw[dashed] (-3.7,-0.5) --(-2.3,2.3);
\draw[dashed] (0.35,0.3) --(-2.3,2.3);
\node at (4.1,0) {$G\lbrace{v_2}\rbrace$};
\node at (-4,0.8) {$G\lbrace{v_1}\rbrace$};
\node at (0.5,4) {$G\lbrace{v_3}\rbrace$};
\node at (0.5,-2) {$G$};
    \foreach \pos/\name in {{(0,0)/v_1}, {(1,0)/v_2}, {(0.5,1)/v_3},{(-1,0)/{i}},{(-2,0)/{j}},{(-3,0)/{l}},{(2,0)/{m}},{(3,0)/{n}},{(-1,0.8)/q},{(-2,0.8)/r},{(-2,1.5)/a}, {(3,-0.8)/b},{(3,0.6)/{c}}, {(0.5,2)/d}, {(0.5,3)/f},{(-0.6,3)/g},{(1.6,3)/h}}{\node[vertex] (\name) at \pos {$\name$};}
    \foreach \source/ \dest in {v_1/v_2, v_1/v_3, v_2/v_3, v_1/i, i/j, l/j, m/n, v_2/m, m/c, v_2/b, v_1/q, q/r, q/a, v_3/d,d/f,d/g,d/h}
    \path[edge] (\source) -- (\dest);
\draw[dashed] (9.2,0.65) --(9.7,0.65) (9.7,0.65) --(11.2,3.4);
\draw[dashed] (9.2,0.65) --(7.7,3.4);
\draw[dashed] (7.7,3.4) --(11.2,3.4);
\draw[dashed] (9.6,0.4) -- (12.4,0.4);
\draw[dashed] (9.6,0.3) -- (9.6,-0.4);
\draw[dashed] (9.6,-0.4) -- (12.4,-0.4);
\draw[dashed] (12.4,0.3) -- (12.4,-0.4);
\draw[dashed] (9.35,0.3) -- (9.35,-0.5);
\draw[dashed] (9.35,-0.5)--(6.7,-0.3);
\draw[dashed] (6.7,-0.3) --(6.7,2.3);
\draw[dashed] (9.35,0.3) --(6.7,2.3);
\node at (13.2,0) {$H\lbrace{v_5}\rbrace$};
\node at (6,0.8) {$H\lbrace{v_4}\rbrace$};
\node at (9.5,4) {$H\lbrace{v_6}\rbrace$};
\node at (9.5,-2) {$H$};
    \foreach \pos/\name in {{(9,0)/v_4}, {(10,0)/v_5}, {(9.5,1)/v_6},{(8,0)/{z}},{(7,0)/{t}},{(11,0)/{x}},{(12,0)/{y}},{(8,0.8)/w},{(7,0.8)/s},{(7,1.5)/u}, {(9.5,2)/o},{(8.4,3)/e},{(10.6,3)/p}}{\node[vertex] (\name) at \pos {$\name$};}
    \foreach \source/ \dest in {v_4/v_5, v_4/v_6, v_5/v_6, v_4/z, z/t, x/y, v_5/x, v_4/w, w/s, w/u, v_6/o,o/e,o/p}
    \path[edge] (\source) -- (\dest);
\end{tikzpicture}
\end{center}
\end{scriptsize}
 \caption{Unicyclic graphs of Type $I$ and $II$ and their pendant trees.}
\label{f8}
\end{figure}
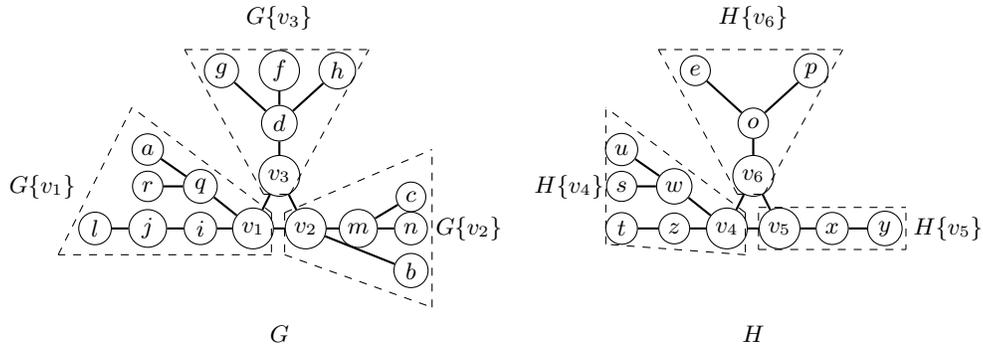

\begin{defn}\cite{nulidade}
For a tree $G\lbrace{v}\rbrace$ with at least two vertices, the vertex
$v\in{G\lbrace{v}\rbrace}$ is called mismatched in $G\lbrace{v}\rbrace$ if
there exists a maximum matching of $G\lbrace{v}\rbrace$ that does not
saturate $v$; otherwise, $v$ is called matched in $G\lbrace{v}\rbrace$. If a
tree consists of only one vertex it is considered mismatched.

A unicyclic graph $G$ is said be of Type $I$ if there exists a vertex $v$ on
the cycle of $G$ such that $v$ is matched in $G\lbrace{v}\rbrace$, otherwise,
$G$ is said to be of Type $II$.
\end{defn}

To emphasize, a unicyclic graph $G$ is of Type $I$, if there exists a vertex
$v$ of its cycle that is saturated by all maximum matchings of the pendant
tree $G\lbrace{v}\rbrace$. $G$ is of Type $II$ if any vertex $v$ of its cycle
is not saturated by some maximum matching of $G\lbrace{v}\rbrace$.

As an example, consider the unicyclic graph $G$ in Figure \ref{f8}. We notice
that $G$ is of Type $I$, because the vertex $v_1$ is matched in
$G\lbrace{v_1}\rbrace$. Indeed, the maximum matchings of
$G\lbrace{v_1}\rbrace$ are
$\lbrace{\lbrace{a,q}\rbrace,\lbrace{v_1,i}\rbrace,
\lbrace{j,l}\rbrace}\rbrace$ and
$\lbrace{\lbrace{r,q}\rbrace,\lbrace{v_1,i}\rbrace,\lbrace{j,l}\rbrace}
\rbrace$ and both saturate $v_1$. On the other hand, the unicyclic graph $H$
of Figure \ref{f8} is of Type $II$, because the pendant trees
$H\lbrace{v_4}\rbrace$, $H\lbrace{v_5}\rbrace$ and $H\lbrace{v_6}\rbrace$
have maximum matchings that do not saturate $v_4$, $v_5$ and $v_6$,
respectively. For example,
$\lbrace{\lbrace{u,w}\rbrace,\lbrace{t,z}\rbrace}\rbrace$,
$\lbrace{\lbrace{x,y}\rbrace}\rbrace$ and
$\lbrace{\lbrace{p,o}\rbrace}\rbrace$ are maximum matchings in
$H\lbrace{v_4}\rbrace$, $H\lbrace{v_5}\rbrace$ and $H\lbrace{v_6}\rbrace$,
respectively, that do not saturate $v_4$, $v_5$ and $v_6$, respectively.

We show next that in order to verify that a unicyclic graph is Type $I$ or
$II$, it suffices to check whether a vertex $v$ of the cycle is or is not in
the support of the pendant tree $G\lbrace{v}\rbrace$.

\begin{prop}\label{t4}
A unicyclic graph $G$ is of Type $I$ if and only if there exists at least one
pendant tree $G\lbrace{v}\rbrace$ such that
$v\notin{Supp(G\lbrace{v}\rbrace)}$.
\end{prop}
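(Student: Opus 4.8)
The plan is to reduce the statement entirely to the Edmonds--Gallai description of the support furnished by Lemma \ref{t11}, since the notions ``matched/mismatched'' and ``membership in $EG$'' are literally complementary. First I would record that each pendant tree $G\{v\}$ really is a tree: because $G$ is unicyclic with unique cycle $C$ and $G\{v\}$ is by construction the maximal induced connected subgraph meeting $C$ only at $v$, it can contain no cycle. Hence Lemma \ref{t11} applies to each $G\{v\}$ and yields $EG(G\{v\})=Supp(G\{v\})$.

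Next I would simply unwind the definition of $EG$. By definition $v\in EG(G\{v\})$ holds exactly when some maximum matching of $G\{v\}$ fails to saturate $v$, which is precisely the condition that $v$ is mismatched in $G\{v\}$. Taking negations, $v$ is matched in $G\{v\}$ if and only if $v\notin EG(G\{v\})$, and by the identification above this is equivalent to $v\notin Supp(G\{v\})$. From here the proposition is immediate: $G$ is of Type $I$ means, by definition, that there exists a cycle vertex $v$ matched in $G\{v\}$, which by the displayed equivalence is the same as the existence of a cycle vertex $v$ with $v\notin Supp(G\{v\})$. The converse direction is the same chain of equivalences read backwards.

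The one place I expect to need extra care, rather than a genuine obstacle, is the degenerate convention that a single-vertex pendant tree is declared mismatched, since Lemma \ref{t11} is stated for trees in general and I want the equivalence to hold uniformly. I would dispatch this by direct inspection: when $G\{v\}=K_1$ its adjacency matrix is the $1\times 1$ zero matrix, so $\mathcal{N}(G\{v\})=\mathbb{R}$ and $Supp(G\{v\})=\{v\}$; correspondingly the only maximum matching of $K_1$ is empty and leaves $v$ unsaturated, so $v\in EG(G\{v\})$ and $v$ is mismatched, exactly matching $v\in Supp(G\{v\})$. With this boundary case absorbed, the equivalence ``$v$ matched $\iff v\notin Supp(G\{v\})$'' is valid for every pendant tree, and essentially all the content of the proposition is carried by Lemma \ref{t11}.
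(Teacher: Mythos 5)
Your proof is correct and follows essentially the same route as the paper: both rest on Lemma \ref{t11} ($EG(T)=Supp(T)$) together with the observation that ``$v$ matched in $G\{v\}$'' is by definition the negation of ``$v\in EG(G\{v\})$''. If anything, yours is slightly more complete, since the paper writes out only the forward implication and does not address the single-vertex pendant tree, both of which you handle explicitly.
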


\begin{proof}
Since $G$ is of Type $I$ we know that there exists a vertex $v$ in the cycle
of $G$ such that $v$ is always saturated by any maximum matching in
$G\lbrace{v}\rbrace$, that is, $v\notin{EG(G\{v\})}$, by Lemma \ref{t11} we
have $v\notin{Supp{(G\lbrace{v}\rbrace)}}$.
\end{proof}

Immediately, we obtain the dual result.

\begin{coro}\label{c3}
A unicyclic  graph $G$ is of Type $ II $ if and only if every pendant tree
$G\lbrace{v}\rbrace$ is such that $v\in{Supp(G\lbrace{v}\rbrace)}$.
\end{coro}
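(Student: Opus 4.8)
The plan is to derive Corollary \ref{c3} directly as the logical negation of Proposition \ref{t4}, using the fact that, by definition, a unicyclic graph $G\neq C_n$ is of Type $II$ precisely when it fails to be of Type $I$. Thus the statements ``$G$ is of Type $II$'' and ``$G$ is not of Type $I$'' are interchangeable, and the two types form a genuine dichotomy on such graphs.

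First I would record Proposition \ref{t4} as the biconditional
$$G \text{ is of Type } I \iff \exists\, v \in V(C) \text{ with } v \notin Supp(G\{v\}).$$
Negating both sides and pushing the negation through the existential quantifier gives
$$G \text{ is not of Type } I \iff \forall\, v \in V(C),\ v \in Supp(G\{v\}).$$
Since the left-hand side is exactly the defining condition for Type $II$, the right-hand side yields the claimed characterization: $G$ is of Type $II$ if and only if every pendant tree $G\{v\}$ satisfies $v \in Supp(G\{v\})$.

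Alternatively, one could reprove the statement from scratch in the manner of Proposition \ref{t4}: by definition $G$ is of Type $II$ if and only if every cycle vertex $v$ is mismatched in $G\{v\}$, i.e.\ $v \in EG(G\{v\})$ for all $v \in V(C)$, and Lemma \ref{t11} converts each membership $v \in EG(G\{v\})$ into the membership $v \in Supp(G\{v\})$. This direct route makes clear that no new machinery is required beyond the equality $EG(T)=Supp(T)$.

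The only point demanding care is the correct negation of the quantified statement---turning ``there exists a pendant tree with $v \notin Supp(G\{v\})$'' into ``for every pendant tree, $v \in Supp(G\{v\})$''---which is entirely routine. There is no genuine obstacle here; the corollary is immediate once the Type $I$/Type $II$ dichotomy and Lemma \ref{t11} are in hand, which is precisely why the text labels it as obtained ``immediately'' as the dual result.
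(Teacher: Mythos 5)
Your proposal is correct and matches the paper's own (implicit) argument: the paper obtains Corollary \ref{c3} ``immediately'' as the dual of Proposition \ref{t4}, which is exactly your negation of the biconditional using the fact that Type $II$ is defined as the failure of Type $I$. Your alternative direct route via $EG(T)=Supp(T)$ (Lemma \ref{t11}) is likewise the same mechanism the paper uses to prove Proposition \ref{t4} itself, so nothing genuinely new is added or missing.
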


The following result computes the nullity of a unicyclic graph from the
nullity of its pendant trees.

\begin{lemat}\label{t6}\cite{nulidade}
Let $G$ be a unicyclic graph and let $C$ be its cycle. If $G$ is of Type $I$ and $v\in{V(C)}$ be matched in $G\lbrace{v}\rbrace$, then $$\eta(G)=\eta(G\lbrace{v}\rbrace)+\eta(G-G\lbrace{v}\rbrace).$$
If $G$ is of Type $II$ then $$\eta(G)=\eta(G-C)+\eta(C).$$
\end{lemat}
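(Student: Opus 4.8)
The plan is to compute $\eta(G)=\dim\mathcal N(G)$ directly from the defining system $A(G)x=0$, after isolating one structural fact that will drive both cases. For a tree $T$ with a distinguished vertex $v$, write $T-v=B_1\sqcup\cdots\sqcup B_d$ for the branches at $v$ and let $u_k$ be the neighbour of $v$ in $B_k$. I first want the \emph{vanishing response} fact: if $v$ is matched in $T$, then every solution $w$ of $A(T)w=c\,e_v$ (for any scalar $c$, with $e_v$ the standard basis vector at $v$) satisfies $w_v=0$. To prove it, restrict $w$ to a branch; since the only neighbour of $u_k$ outside $B_k$ is $v$, the restriction solves $A(B_k)\,w|_{B_k}=-w_v\,e_{u_k}$, so $-w_v e_{u_k}\in\operatorname{im}A(B_k)=\mathcal N(B_k)^{\perp}$. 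If $w_v\neq0$ this forces $u_k\notin Supp(B_k)$ for every $k$. But an elementary matching-number identity ($\nu(T-v)=\sum_k\nu(B_k)$, together with the effect of matching $v$ to a $u_j$) shows that $v$ is matched in $T$ precisely when some $u_j$ is mismatched in $B_j$, i.e.\ $u_j\in EG(B_j)=Supp(B_j)$ by Lemma \ref{t11}; this contradiction gives $w_v=0$. The same criterion shows $v$ matched $\iff v\notin Supp(T)\iff e_v\perp\mathcal N(T)$, so in addition the system $A(T)w=c\,e_v$ is always solvable when $v$ is matched.

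For the \emph{Type $I$} formula, fix the matched vertex $v\in V(C)$, set $T=G\{v\}$ and $F=G-T$, and note the only neighbours of $v$ outside $T$ are its two cycle-neighbours $v',v''\in V(F)$. I would study the restriction map $\pi:\mathcal N(G)\to\mathcal N(F)$, $x\mapsto x|_F$. Reading $A(G)x=0$ block-by-block shows $A(T)(x|_T)=-(x_{v'}+x_{v''})e_v$ and $A(F)(x|_F)=-x_v(e_{v'}+e_{v''})$. The vanishing-response fact gives $x_v=0$, so $x|_F\in\mathcal N(F)$ and $\pi$ is well defined; conversely any $\hat x\in\mathcal N(F)$ is hit by solving $A(T)\tilde x=-(\hat x_{v'}+\hat x_{v''})e_v$ (solvable since $v\notin Supp(T)$) and pasting, the equations at $v',v''$ closing up exactly because $\tilde x_v=0$. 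Thus $\operatorname{im}\pi=\mathcal N(F)$, while $\ker\pi$ consists of the null vectors supported in $T$, which extend-by-zero to identify with $\mathcal N(T)$ (again using $x_v=0$). Rank--nullity then gives $\eta(G)=\eta(T)+\eta(F)=\eta(G\{v\})+\eta(G-G\{v\})$.

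For the \emph{Type $II$} formula I run the parallel argument along the whole cycle via $\pi:\mathcal N(G)\to\mathbb R^{V(C)}$, restriction to $V(C)$. By Corollary \ref{c3} every $v_i\in Supp(G\{v_i\})$, which by the branch criterion above forces every root $u_i^{(k)}$ of every branch $B_i^{(k)}$ to be \emph{matched} in $B_i^{(k)}$, i.e.\ $u_i^{(k)}\notin Supp(B_i^{(k)})$. Two consequences follow. First, $x|_{B_i^{(k)}}$ solves $A(B_i^{(k)})w=-x_{v_i}e_{u_i^{(k)}}$, so the vanishing-response fact yields $x_{u_i^{(k)}}=0$; hence each cycle-vertex equation collapses to the cycle's own null condition and $\operatorname{im}\pi\subseteq\mathcal N(C)$. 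Surjectivity onto $\mathcal N(C)$: given $c\in\mathcal N(C)$, solve $A(B_i^{(k)})w=-c_{v_i}e_{u_i^{(k)}}$ on each branch (solvable since $u_i^{(k)}\notin Supp(B_i^{(k)})$) and paste, the cycle equations closing because each $w_{u_i^{(k)}}=0$. Second, every branch null vector already vanishes at its root, so extension-by-zero identifies $\ker\pi$ with $\mathcal N(G-C)=\bigoplus_{i,k}\mathcal N(B_i^{(k)})$. Rank--nullity gives $\eta(G)=\eta(C)+\eta(G-C)$, and Lemma \ref{t1} evaluates $\eta(C)$.

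The main obstacle is precisely the vanishing-response fact of the first paragraph: this is where the combinatorial hypothesis (matched vs.\ mismatched, equivalently $v\notin Supp$ vs.\ $v\in Supp$) gets converted into the linear-algebraic statement that a specific coordinate of the solution is forced to vanish, and it is exactly what makes the boundary equations at the cut vertex $v$ (Type $I$) and at the cycle vertices (Type $II$) close up cleanly. Everything else is bookkeeping with the block structure of $A(G)$ and rank--nullity. The only supporting point needing care is the branch criterion \textquotedblleft $v$ matched $\iff$ some root mismatched\textquotedblright, which I would establish from the matching identity $\nu(T)=\sum_k\nu(B_k)+\varepsilon$, where $\varepsilon=1$ if some $u_j$ is mismatched in $B_j$ and $\varepsilon=0$ otherwise, combined with $EG=Supp$ (Lemma \ref{t11}).
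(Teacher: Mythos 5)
Your proof is correct, but it cannot be ``the same approach as the paper'' for the simple reason that the paper offers no proof at all: Lemma \ref{t6} is imported verbatim from \cite{nulidade}, where the standard argument proceeds combinatorially, by induction on pendant vertices using the classical deletion identity $\eta(G)=\eta(G-u-w)$ for a pendant vertex $u$ with neighbour $w$, together with maximum-matching bookkeeping. Your route is genuinely different and self-contained: you convert the matched/mismatched hypothesis into a support condition via Lemma \ref{t11}, observe that for the symmetric matrix $A(T)$ one has $\operatorname{im}A(T)=\mathcal{N}(T)^{\perp}$, so that $v$ matched $\iff e_v\perp\mathcal{N}(T)$ $\iff$ the system $A(T)w=c\,e_v$ is solvable, and then prove the key ``vanishing response'' fact ($w_v=0$ for every such solution) from the branch criterion ``$v$ matched in $T$ $\iff$ some branch root is mismatched'', which your matching identity $\nu(T)=\sum_k\nu(B_k)+\varepsilon$ does establish (including the single-vertex-branch convention, which is what rules out pendant leaves at cycle vertices in Type $II$). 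With that in hand, both formulas follow from the exactness of the restriction sequence $0\to\mathcal{N}(T)\to\mathcal{N}(G)\to\mathcal{N}(F)\to 0$ (respectively with $C$ and $G-C$ in Type $II$), and I checked the block computations: well-definedness needs exactly $x_v=0$, surjectivity needs exactly solvability plus $\tilde{x}_v=0$, and the kernel identification needs exactly that null vectors of $T$ (resp.\ of the branches) vanish at $v$ (resp.\ at the roots) --- all supplied by your two facts. What your approach buys is more than the nullity count: it exhibits $\mathcal{N}(G)$ explicitly as built from $\mathcal{N}(G\{v\})$ and $\mathcal{N}(G-G\{v\})$ (or $\mathcal{N}(C)$ and $\mathcal{N}(G-C)$), which is structurally sharper than the inductive deletion argument and sits naturally beside the support-based machinery this paper develops. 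One cosmetic slip: Lemma \ref{t1} does not ``evaluate'' $\eta(C)$, it only decides when $\eta(C)>0$; but since the statement of Lemma \ref{t6} only involves $\eta(C)$ as a quantity, nothing in your argument uses that remark.
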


We now obtain a characterization of singular unicyclic graphs using the
support of their pendant trees.

\begin{teore}\label{c4}
Let $G$ be a unicyclic graph and let $C$ be the cycle of $G$. $G$ is singular
if and only if one of the following happens:
\begin{itemize}
\item[\textbf{(i)}] There is a pendant tree $G\lbrace{v}\rbrace$, with
    $v\notin{Supp(G\lbrace{v}\rbrace)}$ and either $G\lbrace{v}\rbrace$
    does not have perfect matching or $G-G\lbrace{v}\rbrace$ does not have
    perfect matching;

\item[\textbf{(ii)}] Every pendant tree $G\lbrace{v}\rbrace$, is such that
    $v\in{Supp(G\lbrace{v}\rbrace)}$ and either one of the trees that
    compose the forest $G-C$ does not have perfect matching or the cycle
    $C$ has length equal to a multiple of $4$.
    \end{itemize}
\end{teore}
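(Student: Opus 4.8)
The plan is to reduce everything to the Type $I$ / Type $II$ dichotomy and then feed the nullity decomposition of Lemma \ref{t6} into the matching characterizations of Lemmas \ref{c2} and \ref{t1}. First I would observe that the leading clause of (i) --- the existence of a pendant tree $G\{v\}$ with $v\notin Supp(G\{v\})$ --- is, by Proposition \ref{t4}, equivalent to $G$ being of Type $I$, while the leading clause of (ii) --- that $v\in Supp(G\{v\})$ for \emph{every} pendant tree --- is, by Corollary \ref{c3}, equivalent to $G$ being of Type $II$. Since $G\neq C_n$ is of exactly one of these two types, at most one of (i) and (ii) can hold, and it suffices to characterize singularity of $G$ separately within each type, translating $\eta>0$ into the language of perfect matchings throughout.

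Suppose first that $G$ is of Type $I$, and fix a cycle vertex $v$ with $v\notin Supp(G\{v\})$, that is, matched in $G\{v\}$. Then Lemma \ref{t6} gives $\eta(G)=\eta(G\{v\})+\eta(G-G\{v\})$. The key structural remark is that both summands refer to trees: $G\{v\}$ is a pendant tree by definition, while deleting all vertices of $G\{v\}$ (in particular $v$) from $G$ opens the cycle into a path and leaves the graph connected and acyclic, so $G-G\{v\}$ is again a tree. Applying Lemma \ref{c2} to each, $\eta(G\{v\})>0$ exactly when $G\{v\}$ has no perfect matching and $\eta(G-G\{v\})>0$ exactly when $G-G\{v\}$ has no perfect matching. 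Hence $\eta(G)>0$ if and only if at least one of $G\{v\}$, $G-G\{v\}$ fails to have a perfect matching, which is precisely condition (i); the two implications are immediate from this equality since $\eta(G)$ is the same for every admissible choice of $v$.

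Suppose now that $G$ is of Type $II$. Here Lemma \ref{t6} gives $\eta(G)=\eta(G-C)+\eta(C)$. The subgraph $G-C$ is a forest, since deleting all vertices of the cycle destroys the only cycle of $G$; the adjacency matrix of a forest being block diagonal, $\eta(G-C)$ equals the sum of the nullities of its tree components. By Lemma \ref{c2} each component is singular exactly when it has no perfect matching, so $\eta(G-C)>0$ if and only if some tree of $G-C$ has no perfect matching. By Lemma \ref{t1}, $\eta(C)>0$ if and only if the length of $C$ is a multiple of $4$. Combining, $\eta(G)>0$ if and only if some tree in the forest $G-C$ has no perfect matching or $|C|$ is a multiple of $4$, which is exactly condition (ii).

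The argument is essentially an assembly of the cited results, so I do not anticipate a serious obstacle. The only points requiring a little care are the two structural observations --- that $G-G\{v\}$ is a tree in the Type $I$ case and that $G-C$ is a forest in the Type $II$ case --- together with the routine fact that a forest is nonsingular, equivalently admits a perfect matching, exactly when each of its tree components does. Once these are in place, Lemmas \ref{t6}, \ref{c2} and \ref{t1} deliver the two equivalences directly, and the mutual exclusivity from the first paragraph shows that ``(i) or (ii)'' is the correct disjunction.
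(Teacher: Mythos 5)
Your proof is correct and follows essentially the same route as the paper's: Proposition \ref{t4} and Corollary \ref{c3} identify the leading clauses with the Type $I$/Type $II$ dichotomy, Lemma \ref{t6} splits the nullity, and Lemmas \ref{c2} and \ref{t1} translate nonzero nullity into the stated matching conditions. Your supplementary checks---that $G-G\lbrace v\rbrace$ is a tree, that $G-C$ is a forest whose nullity is additive over its components, and that the equivalence in case (i) is insensitive to the choice of admissible $v$---are details the paper leaves implicit rather than a different argument.
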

\begin{proof}

\begin{itemize}
\item[(i)] As there is a pendant tree $G\lbrace{v}\rbrace$ such that
    $v\notin{Supp(G\lbrace{v}\rbrace)}$, we conclude, by Proposition
    \ref{t4} that $G$ is of Type $I$. Moreover, by Lemma \ref{t6} we
    conclude that $G$ will be singular if and only if
    $G\lbrace{v}\rbrace$ or $G-G\lbrace{v}\rbrace$ has
    nonzero nullity, and by Lemma \ref{c2} we know that this only happens
    if $G\lbrace{v}\rbrace$ does not have perfect matching or
    $G-G\lbrace{v}\rbrace$ does not have perfect matching;
\item[(ii)] Since every pendant tree $G\lbrace{v}\rbrace$, is such that
    $v\in{Supp(G\lbrace{v}\rbrace)}$ we conclude by Corollary \ref{c3} that
    $G$ is of Type $II$. Then, by Lemma \ref{t6} we know that $G$ will be singular 
		if and only if $G-C$ or $C$ has nonzero nullity, and by the Lemma \ref{c2} and Lemma \ref{t1} it happens if
    and only if at least one of the trees composing the forest $G-C$ does
    not have perfect matching or the cycle $C$ has length equal to a multiple of
    $4$.
\end{itemize}
\end{proof}

\section{Independence number of Unicyclic graphs}\label{sec:alpha}

In this section we obtain closed formulas for the independence number of a
unicyclic graph $G$. This formula depends on the number of vertices of the
support and also on the amount of $N$-vertices of subtrees of this unicyclic
graph $G$. In order to understand our result, we start by presenting the null
decomposition of trees, given in \cite{tree}.

\begin{defn}
Let $T$ be a tree. The $S$-forest of $T$, denoted by $\mathcal{F}_S(T)$, is
defined as the subgraph induced by the closed neighborhood of $Supp(T)$ in
$T$:
$$\mathcal{F}_S(T)=T\langle{N[Supp(T)]}\rangle.$$
The $N$-forest  of $T$, denoted by $\mathcal{F}_N(T)$, is defined as the
remaining graph:
$$\mathcal{F}_N(T)=T-\mathcal{F}_S(T).$$ The Null Decomposition of $T$ is the
pair $\left(\mathcal{F}_S(T),\mathcal{F}_N(T)\right)$.
\end{defn}

\noindent $V(\mathcal{F}_N(T))$ is called the set of $N$-vertices of $T$.

We represent star vertices in the figures as the $N$-vertices. As an example,
the support of the tree $T$ in Figure \ref{figura2} is
$$Supp(T)=\lbrace{v_2,v_3,v_6,v_7,v_8,v_{10},v_{11},v_{12},v_{19},v_{21},v_{22}}\rbrace.$$
The $S$-forest of $T$ generated by the closed neighborhood of the support
consists of
\begin{eqnarray*}
\mathcal{F}_S(T)&=&T\langle{N[Supp(T)]}\rangle=T\langle{N[\lbrace{v_2,v_3,v_6,v_7,v_8,v_{10},v_{11},v_{12},v_{19},v_{21},v_{22}}\rbrace]}\rangle\\
&=&T\langle{\lbrace{v_1,v_2,v_3,v_4,v_5,v_6,v_7,v_8,v_9,v_{10},v_{11},v_{12},v_{19},v_{20},v_{21},v_{22}}\rbrace}\rangle\\
&=&S_1\cup{S_2}\cup{S_3}\cup{S_4}.
\end{eqnarray*}
The $N$-forest of $T$ consists of:
\begin{eqnarray*}
\mathcal{F}_N(T)&=&T-\mathcal{F}_S(T)\\
&=&T\langle{\lbrace{v_{13},v_{14},v_{15},v_{16},v_{17},v_{18}}\rbrace}\rangle\\
&=&N_1\cup{N_2}.
\end{eqnarray*}

Figure \ref{figura2} illustrates the null decomposition of the tree $T$.

\usetikzlibrary{shapes,snakes}
\tikzstyle{vertex}=[circle,draw,minimum size=1pt,inner sep=1pt]
\tikzstyle{edge} = [draw,thick,-]
\tikzstyle{matched edge} = [draw,snake=zigzag,line width=1pt,-]
\begin{figure}[h!]
\begin{scriptsize}
\begin{tikzpicture}[scale=1.3,auto,swap]
\node[draw,rectangle,label=below left:] (2) at (0,0) {$v_2$};
\node[draw,circle,label=below left:] (1) at (1,0) {$v_1$};
\node[draw,rectangle,label=below left:] (3) at (1,1) {$v_3$};
\node[draw,star,star points=9,star point ratio=0.6,label=below left:] (13) at (2,0) {$v_{13}$};
\node[draw,star,star points=9,star point ratio=0.6,label=below left:] (14) at (3,0) {$v_{14}$};
\node[draw,circle,label=below left:] (4) at (4.5,0) {$v_4$};
\node[draw,rectangle,label=below left:] (6) at (4,1) {$v_6$};
\node[draw,rectangle,label=below left:] (7) at (5,1) {$v_7$};
\node[draw,circle,label=below left:] (5) at (6.5,0) {$v_5$};
\node[draw,rectangle,label=below left:] (8) at (7,1) {$v_8$};
\node[draw,circle,label=below left:] (9) at (2,-1) {$v_9$};
\node[draw,star,star points=9,star point ratio=0.6,label=below left:] (16) at (3,-1) {$v_{16}$};
\node[draw,star,star points=9,star point ratio=0.6,label=below left:] (15) at (4,-1) {$v_{15}$};
\node[draw,star,star points=9,star point ratio=0.6,label=below left:] (17) at (3,-2) {$v_{17}$};
\node[draw,star,star points=9,star point ratio=0.6,label=below left:] (18) at (3,-3) {$v_{18}$};
\node[draw,rectangle,label=below left:] (10) at (1,-1) {$v_{10}$};
\node[draw,rectangle,label=below left:] (11) at (1,-2) {$v_{11}$};
\node[draw,rectangle,label=below left:] (12) at (2,-2) {$v_{12}$};
\node[draw,rectangle,label=below left:] (19) at (6,1) {$v_{19}$};
\node[draw,circle,label=below left:] (20) at (5,-2) {$v_{20}$};
\node[draw,rectangle,label=below left:] (21) at (5,-1) {$v_{21}$};
\node[draw,rectangle,label=below left:] (22) at (5,-3) {$v_{22}$};
\draw[dashed] (-0.3,0.3) -- (0.7,0.3);
\draw[dashed] (-0.3,0.3) -- (-0.3,-0.3);
\draw[dashed] (-0.3,-0.3) -- (1.3,-0.3);
\draw[dashed] (1.3,-0.3) -- (1.3,1.3);
\draw[dashed] (0.7,1.3) -- (1.3,1.3);
\draw[dashed] (0.7,1.3) -- (0.7,0.3);
\node at (0.3,0.7) {$S_1$};
\draw[dashed] (1.7,0.3) -- (3.3,0.3);
\draw[dashed] (1.7,0.3) -- (1.7,-0.3);
\draw[dashed] (1.7,-0.3) -- (3.3,-0.3);
\draw[dashed] (3.3,0.3) -- (3.3,-0.3);
\node at (2.5,0.5) {$N_1$};
\draw[dashed] (0.7,-0.7) -- (0.7,-2.3);
\draw[dashed] (0.7,-0.7) -- (2.3,-0.7);
\draw[dashed] (2.3,-0.7) -- (2.3,-2.3);
\draw[dashed] (0.7,-2.3) -- (2.3,-2.3);
\node at (0.5,-1.5) {$S_3$};
\draw[dashed] (2.7,-3.3) -- (2.7,-0.7);
\draw[dashed] (2.7,-0.7) -- (4.3,-0.7);
\draw[dashed] (4.3,-0.7) -- (4.3,-1.3);
\draw[dashed] (4.3,-1.3) -- (3.3,-1.3);
\draw[dashed] (3.3,-1.3) -- (3.3,-3.3);
\draw[dashed] (3.3,-3.3) -- (2.7,-3.3);
\node at (3,-3.5) {$N_2$};
\draw[dashed] (4.2,-0.3) -- (6.8,-0.3);
\draw[dashed] (3.6,1.3) -- (7.4,1.3);
\draw[dashed] (6.8,-0.3) -- (7.4,1.3);
\draw[dashed] (4.2,-0.3) --(3.6,1.3);
\node at (5.5,1.5) {$S_2$};
\draw[dashed] (4.7,-0.7) --(4.7,-3.3);
\draw[dashed] (5.3,-0.7) --(5.3,-3.3);
\draw[dashed] (4.7,-0.7) --(5.3,-0.7);
\draw[dashed] (4.7,-3.3) --(5.3,-3.3);
\node at (5.5,-2) {$S_4$};
\node at (3,-4) {$T$};
\foreach \from/\to in {1/2,1/3,1/13,13/14,13/9,9/10,9/11,9/12,13/14,14/4,4/6,4/7,5/8,9/16,16/15,16/17,17/18,1/13,4/14,9/13,9/16,4/14,4/5,5/19,20/21,20/22,17/20} {
 \draw (\from) -- (\to);}
 \foreach \source / \dest in {1/2,6/4,8/5,9/10,21/20,17/18,13/14,15/16}
   \path[matched edge] (\source) -- (\dest);
\end{tikzpicture}
 \begin{center}
\caption{Null decomposition of the tree $T$.}\label{figura2}
\end{center}
\end{scriptsize}
\end{figure}
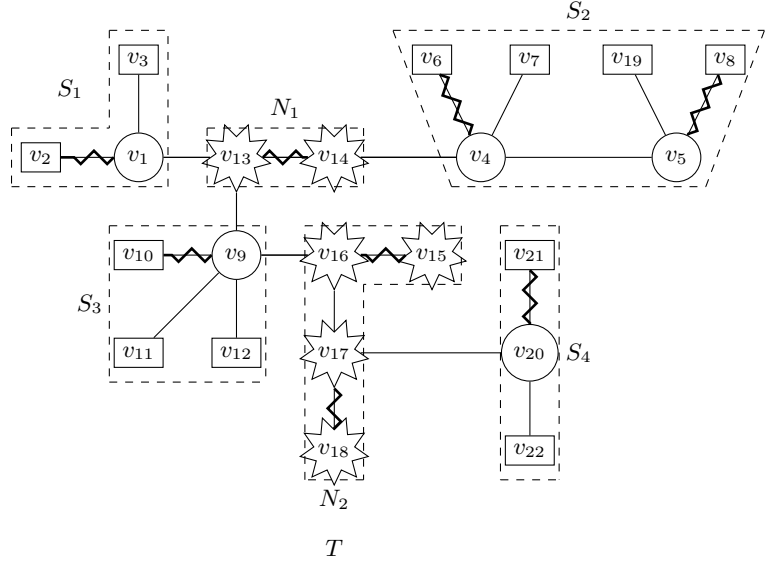

\begin{defn}
The core of $G$, denoted by $Core(G)$, is defined to be the set of all the neighbours of the supported vertices of $G$:
$$Core(G)=\bigcup_{v\in{Supp(G)}}{N(v)}.$$
\end{defn}

\noindent For example, the core of tree $T$ (Figure \ref{figura2}) is:
$$Core(T)=\lbrace{v_1,v_4,v_5,v_9,v_{20}}\rbrace.$$

The next lemma gives closed formulas for the independence and matching
numbers of trees and it is crucial to prove our main  results.

\begin{lemat}\label{t35}\cite{tree} Let $T$ be a tree. Then
\begin{eqnarray*}
\nu(T)&=&\vert{Core(T)}\vert+\frac{\vert{V(\mathcal{F}_N(T))}\vert}{2}\\
\alpha(T)&=&\vert{Supp(T)}\vert+\frac{\vert{V(\mathcal{F}_N(T))}\vert}{2}.
\end{eqnarray*}
\end{lemat}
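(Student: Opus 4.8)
The plan is to exploit the fact that the two identities are not really independent: since $T$ is bipartite, the König--Egerv\'{a}ry theorem together with Gallai's identity gives $\alpha(T)+\nu(T)=|V(T)|=:n$, so it suffices to prove one of the equalities and read off the other by subtraction. Before doing that I would record the elementary counting identity $n=|Supp(T)|+|Core(T)|+|V(\mathcal{F}_N(T))|$. This holds because $Supp(T)$ is independent (Theorem \ref{tindependent}), so no supported vertex is a neighbour of another supported vertex, whence $Supp(T)\cap Core(T)=\emptyset$; as $V(\mathcal{F}_S(T))=N[Supp(T)]=Supp(T)\cup Core(T)$ is then a disjoint union and $\mathcal{F}_N(T)=T-\mathcal{F}_S(T)$, the three sets $Supp(T)$, $Core(T)$, $V(\mathcal{F}_N(T))$ partition $V(T)$.

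The main engine I would use is the Gallai--Edmonds structure theorem applied to $T$, the point being that its three canonical sets are exactly the pieces of the null decomposition. Write $D$ for the set of vertices missed by at least one maximum matching; by definition this is $EG(T)$, which equals $Supp(T)$ by Lemma \ref{t11}. The associated set $A=N(D)\setminus D$ is then $N(Supp(T))\setminus Supp(T)=Core(T)$ by the disjointness above, and the remaining set $C=V(T)\setminus(D\cup A)$ is precisely $V(\mathcal{F}_N(T))$. Since $Supp(T)$ is independent, the subgraph induced on $D$ is edgeless, so it has exactly $|Supp(T)|$ components, each a single (trivially factor-critical) vertex; and the theorem guarantees that the subgraph on $C$, namely $\mathcal{F}_N(T)$, has a perfect matching, consistent with the $N$-forest being a union of nonsingular trees (Lemma \ref{c2}).

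With this dictionary in place the conclusion is bookkeeping: the Gallai--Edmonds deficiency formula says a maximum matching leaves exactly $c(T[D])-|A|=|Supp(T)|-|Core(T)|$ vertices unsaturated, so $\nu(T)=\tfrac12\big(n-(|Supp(T)|-|Core(T)|)\big)=|Core(T)|+\tfrac12|V(\mathcal{F}_N(T))|$, and then $\alpha(T)=n-\nu(T)=|Supp(T)|+\tfrac12|V(\mathcal{F}_N(T))|$. The step I expect to be the real obstacle --- and the only one that uses the null-space hypothesis rather than generic matching theory --- is the identification $D=Supp(T)$, i.e. Lemma \ref{t11}; once that is granted the rest is forced.

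If one prefers to avoid invoking Gallai--Edmonds and stay closer to the null-decomposition spirit, the same equalities can be reached by proving the two lower bounds $\alpha(T)\ge |Supp(T)|+\tfrac12|V(\mathcal{F}_N(T))|$ and $\nu(T)\ge |Core(T)|+\tfrac12|V(\mathcal{F}_N(T))|$ separately and noting that they sum to $n$, forcing equality throughout. The first bound is witnessed by $Supp(T)$ together with a maximum independent set of each component of $\mathcal{F}_N(T)$ (these are non-adjacent to $Supp(T)$ because $\mathcal{F}_N(T)=T-N[Supp(T)]$, and each nonsingular component contributes half its order via König inside the component); the second by combining a perfect matching of $\mathcal{F}_N(T)$ with a matching of $\mathcal{F}_S(T)$ that saturates $Core(T)$ into $Supp(T)$, these being vertex-disjoint since $\mathcal{F}_S(T)$ and $\mathcal{F}_N(T)$ partition $V(T)$. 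In this alternative the genuine difficulty migrates to producing that $Core$-saturating matching, i.e. verifying a Hall-type balance inside each $S$-component.
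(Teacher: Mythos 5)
Your argument is correct, but it cannot be ``the same as the paper's'' for a simple reason: the paper offers no proof of Lemma \ref{t35} at all --- it is imported wholesale from \cite{tree}. Measured against the source, your primary route is genuinely different and quite efficient. The dictionary you set up is exactly right: $D=EG(T)=Supp(T)$ is Lemma \ref{t11}, the independence of the support (Theorem \ref{tindependent}) gives both $Supp(T)\cap Core(T)=\emptyset$ (so $A=N(D)\setminus D=Core(T)$ and $T[D]$ is edgeless with $|Supp(T)|$ singleton components) and, via $V(\mathcal{F}_N(T))=V(T)\setminus N[Supp(T)]$, the identification $C=V(\mathcal{F}_N(T))$; the deficiency formula then yields $\nu(T)=\tfrac12\bigl(n-|Supp(T)|+|Core(T)|\bigr)=|Core(T)|+\tfrac12|V(\mathcal{F}_N(T))|$, and K\H{o}nig--Gallai converts this into the $\alpha$ formula. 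You are also right to flag $D=Supp(T)$ as the only step where the null space enters; everything else is generic matching theory. What the Gallai--Edmonds route buys is economy: one classical black box delivers both identities simultaneously. What it costs is self-containedness --- the proof in \cite{tree} (which your fallback sketch essentially reconstructs) stays inside the null-decomposition framework, producing explicit maximum matchings and maximum independent sets, and the Hall-type step you correctly identify as the residual difficulty there is precisely the content of Lemma 3.5 of \cite{tree}, quoted in this paper inside the proof of Lemma \ref{tcore} (the strict inequality $|N(C')\cap Supp(T)|>|C'|$ for $C'\subseteq Core(T)$ gives the $Core$-saturating matching by Hall, while Lemma \ref{c2} supplies the perfect matching of $\mathcal{F}_N(T)$). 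Either completion is sound; your two lower bounds sum to $n=|Supp(T)|+|Core(T)|+|V(\mathcal{F}_N(T))|$, and $\alpha(T)+\nu(T)=n$ forces equality in both, exactly as you say.
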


Our next result tells us that given any vertex in a non-singular tree, there
will always be at least one maximum independent set that does not contain
this vertex and another maximum independent set that contains this vertex.

\begin{prop}\label{ttt30}
Let $T$ be a non-singular tree and $v\in{V(T)}$. Then there exist
$I_1,I_2\in\mathcal{I}(T)$ such that $v\in{I_1}$ and
$v\notin{I_2}$.
\end{prop}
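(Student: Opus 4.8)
The plan is to exploit the fact that a non-singular tree has a perfect matching and to reduce the two-part statement to a single assertion about coverage by maximum independent sets. First I would record the structural consequences of non-singularity. By Lemma \ref{c2}, $T$ has a perfect matching $M$; in particular $n=|V(T)|$ is even, and since $Supp(T)=\emptyset$ we have $\mathcal{F}_N(T)=T$, so Lemma \ref{t35} gives $\alpha(T)=n/2$. Because $M$ partitions $V(T)$ into $n/2$ mutually adjacent pairs, any independent set meets each pair in at most one vertex; hence a set of size $\alpha(T)=n/2$ must contain \emph{exactly} one endpoint of every edge of $M$. This observation lets me reduce the proposition to the single claim that every vertex of $T$ lies in some maximum independent set: applying it to $v$ yields $I_1\ni v$, and applying it to the $M$-partner $v'$ of $v$ yields a maximum independent set $J\ni v'$, which by the previous sentence contains exactly one endpoint of $\{v,v'\}$, forcing $v\notin J$, so $I_2:=J$ works.

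To prove the reduced claim I would use induction on the number of vertices, phrased for forests with a perfect matching so that the deletion step stays inside the class; note that $\alpha(F)=|V(F)|/2$ for any such forest $F$, again by Lemma \ref{t35} applied componentwise. Given a target vertex $u$, let $T'$ be its component. If $T'\cong K_2$ the claim is immediate, taking $u$ together with any maximum independent set of the remaining components. Otherwise $|V(T')|\ge 4$, and I would select a leaf $\ell$ of $T'$ whose neighbour $p$ (its forced $M$-partner) satisfies $u\notin\{\ell,p\}$. Deleting the edge $\{\ell,p\}$ leaves a smaller forest with a perfect matching $M\setminus\{\{\ell,p\}\}$ that still contains $u$, so by induction it has a maximum independent set $I'\ni u$; then $I'\cup\{\ell\}$ is independent, since the only neighbour $p$ of $\ell$ has been removed, and its size is $|I'|+1=\tfrac{1}{2}(|V(F)|-2)+1=\tfrac{1}{2}|V(F)|=\alpha(F)$, hence it is a maximum independent set containing $u$.

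The delicate point, and the main obstacle, is guaranteeing the existence of such a leaf $\ell$ with $u\notin\{\ell,p\}$. The key fact I would establish first is that in a tree with a perfect matching no two leaves can share a neighbour, because each leaf is forced to be matched to its unique neighbour; consequently $u$ is adjacent to at most one leaf. Weighing the at-most-one leaf that equals $u$ and the at-most-one leaf adjacent to $u$ against the at least two leaves present in $T'$, I would check that when $|V(T')|\ge 4$ some leaf avoids both obstructions, the only alternative being that these two obstructions coincide with all leaves, which forces $T'\cong K_2$ and is therefore excluded. Once this leaf-selection is secured, the induction together with the reduction of the first paragraph assembles into the full statement.
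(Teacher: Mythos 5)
Your proof is correct, but it takes a genuinely different and more elementary route than the paper's. The paper's argument is a two-line exploitation of bipartiteness: writing $V(T)=B_1\cup{B_2}$ for the bipartition, the perfect matching forces $\vert{B_1}\vert=\vert{B_2}\vert=\frac{\vert{V(T)}\vert}{2}=\alpha(T)$, so both colour classes are maximum independent sets and $v$ lies in exactly one of them, which immediately yields $I_1$ and $I_2$. You instead prove the stronger coverage statement that in any forest with a perfect matching \emph{every} vertex lies in some maximum independent set, via induction with a leaf-selection lemma (no two leaves share a neighbour when a perfect matching exists, so at most two leaves are ``bad'' for a given target vertex), and then recover the non-containment half from the observation that a maximum independent set must contain exactly one endpoint of each matching edge, applied to the $M$-partner $v'$ of $v$. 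Your route costs more work (the induction and the leaf-counting case analysis) but buys a constructive, bipartiteness-free argument together with an explicit structural fact about maximum independent sets; the paper's route buys brevity at the price of implicitly invoking K\H{o}nig-type reasoning behind its assertion $\alpha(T)=\nu(T)$. One slip to fix in your write-up: in the induction step you say ``deleting the edge $\lbrace{\ell,p}\rbrace$'', but deleting only the edge leaves $\ell$ and $p$ unsaturated and does not shrink the vertex set; your own size computation $\vert{I'}\vert+1=\tfrac{1}{2}(\vert{V(F)}\vert-2)+1$ shows you mean deleting the \emph{vertices} $\ell$ and $p$, after which $M\setminus\lbrace\lbrace{\ell,p}\rbrace\rbrace$ is indeed a perfect matching of $F-\lbrace{\ell,p}\rbrace$ and the argument goes through as intended.
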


\begin{proof}
Since $T$ is a tree we have that $T$ is a bipartite graph. Then there are two disjoint subsets $B_1$ and $B_2$ of $V(T)$ such that $V(T)=B_1\cup{B_2}$ and for all $\lbrace{a,b}\rbrace\in{E(T)}$ we have $\lbrace{a,b}\rbrace\cap{B_1}\neq\emptyset$ and $\lbrace{a,b}\rbrace\cap{B_2}\neq\emptyset$. As $T$ is a non-singular tree, it has perfect matching $M$.
As $\alpha(T)=\nu(T)=\vert{M}\vert=\frac{\vert{V(T)}\vert}{2}$ and for all $\lbrace{a,b}\rbrace\in{M}$ we have $\lbrace{a,b}\rbrace\cap{B_1}\neq\emptyset$ and $\lbrace{a,b}\rbrace\cap{B_2}\neq\emptyset$, then $\vert{B_1}\vert=\vert{B_2}\vert=\frac{\vert{V(T)}\vert}{2}$. That is, $B_1,B_2\in\mathcal{I}(T)$. Therefore, given a $v\in{V(T)}$ we have $v\in{B_1}$ and $v\notin{B_2}$ or $v\in{B_2}$ and $v\notin{B_1}$.
\end{proof}

\begin{lemat}\label{t30}
If $T$ is a tree and $v\in{V(\mathcal{F}_N(T))}$ then there exist
$I_1,I_2\in\mathcal{I}(T)$ such that $v\in{I_1}$ and
$v\notin{I_2}$.
\end{lemat}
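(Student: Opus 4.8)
The plan is to construct the two desired maximum independent sets by gluing the support $Supp(T)$ to an independent set living inside the $N$-forest, exploiting the fact that these two regions of $T$ are combinatorially separated. First I would record the structural input from the null decomposition of \cite{tree}: every connected component of $\mathcal{F}_N(T)$ is a non-singular tree. In particular, as noted in the proof of Proposition \ref{ttt30}, each such component $N_i$ has a perfect matching and hence $\alpha(N_i)=\nu(N_i)=|V(N_i)|/2$, so a maximum independent set of the whole forest $\mathcal{F}_N(T)$ has size $|V(\mathcal{F}_N(T))|/2$.

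The key observation I would isolate is that no edge of $T$ joins a vertex of $Supp(T)$ to a vertex of $V(\mathcal{F}_N(T))$: by the definition of the core, every neighbour of a supported vertex lies in $Core(T)\subseteq V(\mathcal{F}_S(T))$, which is disjoint from $V(\mathcal{F}_N(T))$. Consequently, for any independent set $J$ of $\mathcal{F}_N(T)$, the union $Supp(T)\cup J$ is independent in $T$, since $Supp(T)$ is independent by Theorem \ref{tindependent}, $J$ is independent, and there are no edges between the two parts. Moreover, if $J$ is maximum in $\mathcal{F}_N(T)$, then by Lemma \ref{t35} the set $Supp(T)\cup J$ has cardinality $|Supp(T)|+|V(\mathcal{F}_N(T))|/2=\alpha(T)$, so it is a maximum independent set of $T$.

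To finish, I would let $N_i$ be the component of $\mathcal{F}_N(T)$ containing $v$. As $N_i$ is non-singular, Proposition \ref{ttt30} supplies $J_1,J_2\in\mathcal{I}(N_i)$ with $v\in J_1$ and $v\notin J_2$. Adjoining a maximum independent set of each remaining component of $\mathcal{F}_N(T)$ to $J_1$ and to $J_2$ yields maximum independent sets $\widetilde{J}_1,\widetilde{J}_2$ of $\mathcal{F}_N(T)$ with $v\in\widetilde{J}_1$ and $v\notin\widetilde{J}_2$. Then $I_1=Supp(T)\cup\widetilde{J}_1$ and $I_2=Supp(T)\cup\widetilde{J}_2$ are the required members of $\mathcal{I}(T)$. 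The only genuinely load-bearing step is the no-edge observation between $Supp(T)$ and $\mathcal{F}_N(T)$, which is what lets the two independent pieces be combined freely; the remainder is bookkeeping with the formula of Lemma \ref{t35} and a per-component application of Proposition \ref{ttt30}, so the point I would most want to double-check is that the $N$-forest components really are non-singular, since that is exactly the hypothesis Proposition \ref{ttt30} requires.
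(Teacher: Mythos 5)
Your proposal is correct and follows essentially the same route as the paper's own proof: both rest on the null decomposition facts that the components of $\mathcal{F}_N(T)$ are non-singular, apply Proposition \ref{ttt30} to the component containing $v$, glue the resulting sets (together with maximum independent sets of the remaining components) onto $Supp(T)$ using the observation that $N(Supp(T))=Core(T)$ is disjoint from $V(\mathcal{F}_N(T))$, and certify maximality via the cardinality formula of Lemma \ref{t35}. The point you flag for double-checking, the non-singularity of the $N$-forest components, is exactly the input the paper draws from Theorems 4.5 and 4.13 of \cite{tree}.
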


\begin{proof}
The null decomposition, in general, divides a tree $T$ into two
forests (one them may be empty), a forest formed by singular trees, denoted
by  $\mathcal{F}_S(T)$, and other formed by non-singular trees, denoted by
$\mathcal{F}_N(T)$ (see Theorem 4.5 and Theorem 4.13 of \cite{tree}).

Moreover, we have $Supp(T)=\bigcup\limits_{S\in\mathcal{F}_S(T)}Supp(S)$ and
$V(\mathcal{F}_N(T))=\bigcup\limits_{N\in\mathcal{F}_N(T)}V(N)$ . Thus, if
$v\in{V(\mathcal{F}_N(T))}$ we have $v\in{V(N_1)}$ for some
$N_1\in\mathcal{F}_N(T)$. As $N_1$ is non-singular using Proposition
\ref{ttt30} we obtain $I_{N_1},J_{N_1}\in\mathcal{I}(N_1)$ such that
$v\in{J_{N_1}}$ and $v\notin{I_{N_1}}$.

Let \begin{eqnarray*}
I_1&=&Supp(T)\cup{I_{N_1}}\cup\left(\bigcup\limits_{N\in\mathcal{F}_N(T)\mbox{ and }N\neq{N_1}}I_N\right)\mbox{ and}\\ I_2&=&Supp(T)\cup{J_{N_1}}\cup\left(\bigcup\limits_{N\in\mathcal{F}_N(T)\mbox{ and }N\neq{N_1}}J_N\right).
\end{eqnarray*}
We observe that $I_1$ and $I_2$ are independent sets, because $Supp(T)$ is an
independent set of $T$ and $N(Supp(T))=Core(T)$, then
$N(Supp(T))\cap{\left(\bigcup\limits_{N\in\mathcal{F}_N(T)}I_N\right)}=\emptyset$.
Notice that
$\vert{I_1}\vert=\vert{I_2}\vert=\vert{Supp(T)}\vert+\frac{\vert{V(\mathcal{F}_N(T))}\vert}{2}$,
then  $I_1,I_2\in\mathcal{I}(T)$ by Lemma \ref{t35}. Moreover, we have
$v\notin{I_1}$ and $v\in{I_2}$.
\end{proof}

\begin{lemat}\label{tcore}
Let $T$ be a tree and $I$ an independent set of $T$. If $c_i\in{Core(T)}$ and
$c_i\in{I}$ then $I\notin\mathcal{I}(T)$.
\end{lemat}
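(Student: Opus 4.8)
The plan is to prove the contrapositive: if $I$ is an independent set of $T$ with $I\cap Core(T)\neq\emptyset$, I will show $|I|<\alpha(T)$, so that $I\notin\mathcal{I}(T)$. Everything is organized around the null decomposition. First I would record the partition $V(T)=Supp(T)\sqcup Core(T)\sqcup V(\mathcal{F}_N(T))$: these sets are pairwise disjoint because $Supp(T)$ is independent (Theorem \ref{tindependent}), so $Core(T)=N(Supp(T))$ meets neither $Supp(T)$ nor $V(\mathcal{F}_N(T))$, the latter since $Core(T)\subseteq N[Supp(T)]=V(\mathcal{F}_S(T))$. Writing $\mathcal{F}_S(T)$ as the disjoint union of its components (the $S$-trees $S$), I use from \cite{tree} that $Supp(T)=\bigcup_S Supp(S)$, together with the fact that every support vertex has all its neighbours inside its own component (its neighbours lie in $Core(T)\subseteq V(\mathcal{F}_S(T))$). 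This yields $Core(T)=\bigcup_S Core(S)$ and lets me split any independent set as $|I|=\sum_S |I\cap V(S)|+|I\cap V(\mathcal{F}_N(T))|$, each summand being an independent set of the corresponding component.

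Next I would bound the two kinds of pieces using Lemma \ref{t35}. Each component $N$ of $\mathcal{F}_N(T)$ is non-singular, so its own $N$-forest is all of $N$ and Lemma \ref{t35} gives $\alpha(N)=|V(N)|/2$; hence $|I\cap V(\mathcal{F}_N(T))|\le |V(\mathcal{F}_N(T))|/2$. For an $S$-tree $S$ I would first verify $\mathcal{F}_N(S)=\emptyset$: every vertex of $S$ lies in $N_T[Supp(T)]$, and a neighbour witnessing membership in $Core(T)$ lies in the same component $S$, so $V(S)=N_S[Supp(S)]$; Lemma \ref{t35} applied to $S$ then gives $\alpha(S)=|Supp(S)|$. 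Therefore $|I\cap V(S)|\le |Supp(S)|$ for every $S$-tree, with equality only if $I\cap V(S)$ is a maximum independent set of $S$.

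The heart of the proof is the strict inequality on the $S$-tree $S_0$ that contains the given core vertex $c_i$, namely $|I\cap V(S_0)|<|Supp(S_0)|$; equivalently, $Supp(S_0)$ is the \emph{unique} maximum independent set of $S_0$. I would approach this through matchings. By Lemma \ref{t35}, $\nu(S_0)=|Core(S_0)|$, and since $Supp(S_0)$ is independent, $Core(S_0)$ is itself a vertex cover of $S_0$ of size $\nu(S_0)$; König's theorem then forces every maximum matching to saturate all of $Core(S_0)$ and to match it injectively into $Supp(S_0)$. Consequently, if $J:=I\cap V(S_0)$ contains core vertices, each of them excludes (via its matching partner) a distinct vertex of $Supp(S_0)$ from $J$, which already gives the non-strict bound $|J|\le|Supp(S_0)|$. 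To upgrade this to a strict inequality I would run an alternating argument: assuming $|J|=|Supp(S_0)|$, the symmetric difference $J\triangle Supp(S_0)$ spans a subforest of $S_0$ carrying a perfect matching, and chasing a leaf of this forest (this is where acyclicity is essential) produces either a support vertex that may be added to $J$ or a core vertex that may be swapped out, contradicting the maximality of $J$. Alternatively, this uniqueness is part of the structural description of $S$-trees in \cite{tree} and may be cited directly.

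Assembling everything: when $I\cap Core(T)\neq\emptyset$, the core vertex lies in some $S_0$, so $|I\cap V(S_0)|\le|Supp(S_0)|-1$, while $|I\cap V(S)|\le|Supp(S)|$ for the remaining $S$-trees and $|I\cap V(\mathcal{F}_N(T))|\le|V(\mathcal{F}_N(T))|/2$; summing and invoking $\alpha(T)=|Supp(T)|+|V(\mathcal{F}_N(T))|/2$ from Lemma \ref{t35} gives $|I|\le\alpha(T)-1<\alpha(T)$, whence $I\notin\mathcal{I}(T)$. I expect the main obstacle to be exactly the strict inequality on $S_0$: the injective matching of $Core(S_0)$ into $Supp(S_0)$ delivers the non-strict bound at once, but excluding the equality case is where the acyclic structure of the tree (or the explicit $S$-tree construction of \cite{tree}) must genuinely be used.
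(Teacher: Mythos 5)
Your reduction to the single $S$-tree $S_0$ containing $c_i$ is sound, and you correctly identify that everything hinges on the strict inequality $|I\cap V(S_0)|<|Supp(S_0)|$. But the mechanism you propose for that strictness does not work. The facts you assemble via K\"onig --- $V(S_0)=Supp(S_0)\sqcup Core(S_0)$ with $Supp(S_0)$ independent, $\alpha(S_0)=|Supp(S_0)|$, $\nu(S_0)=|Core(S_0)|$, and every maximum matching matching $Core(S_0)$ injectively into $Supp(S_0)$ --- are provably insufficient to force uniqueness of the maximum independent set: the one-edge tree with ``support'' $\{u\}$ and ``core'' $\{v\}$ satisfies all of them, yet $\{v\}$ is a second maximum independent set. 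So no leaf-chasing on the symmetric difference can close the gap from those ingredients alone; indeed the claim that $J\triangle Supp(S_0)$ induces a forest carrying a perfect matching is itself unjustified (equal part sizes per component is all the standard swap argument yields, and that produces another maximum independent set containing core vertices --- not yet a contradiction). What is genuinely needed is a strict Hall-type expansion property of null-space supports, and this is exactly the external input the paper uses: Lemma 3.5 of \cite{tree}, which states $\vert{N(C')\cap Supp(T)}\vert>\vert{C'}\vert$ for every nonempty set $C'$ of core vertices (note the one-edge example fails it, since $|N(\{v\})\cap\{u\}|=1$). Your fallback sentence --- citing the structural description in \cite{tree} directly --- is therefore not an optional shortcut but the only way to complete your argument; your ``alternating argument'' as described is a gap.

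It is also worth noting that once Lemma 3.5 of \cite{tree} is in hand, your entire apparatus (component decomposition of $\mathcal{F}_S(T)$, K\"onig's theorem, per-component bounds, Lemma \ref{t35}) becomes unnecessary. The paper's proof is a single global swap: write $I$ as its support part $\{s_1,\ldots,s_j\}$, core part $\{c_1,\ldots,c_t\}$ and $N$-vertex part $\{n_1,\ldots,n_r\}$, and replace the core part by $N(\{c_1,\ldots,c_t\})\cap Supp(T)$. The resulting set $J$ is independent because $Supp(T)$ is independent (Theorem \ref{tindependent}), $N$-vertices have no support neighbours, and $\{s_1,\ldots,s_j\}$ is disjoint from the added neighbours precisely because $I$ was independent; then Lemma 3.5 of \cite{tree} gives $\vert{J}\vert>\vert{I}\vert$ outright, without ever computing $\alpha(T)$. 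So your plan, patched by the citation, is correct but considerably more roundabout than the paper's direct exchange argument.
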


\begin{proof}
We notice that, in general,
$I=\lbrace{s_1,s_2,\ldots,s_j}\rbrace\cup\lbrace{c_1,c_2,\ldots,c_t}\rbrace\cup\lbrace{n_1,n_2,\ldots,n_r}\rbrace$,
where $\lbrace{s_1,s_2,\ldots,s_j}\rbrace\subseteq{Supp(T)}$,
$\lbrace{c_1,c_2,\ldots,c_t}\rbrace\subseteq{Core(T)}$ and
$\lbrace{n_1,n_2,\ldots,n_r}\rbrace\subseteq{V(\mathcal{F}(T))}$ (possibly we
can have $\lbrace{n_1,n_2,\ldots,n_r}\rbrace=\emptyset$). By Lemma 3.5 of
\cite{tree} we have
$$\vert{N(\lbrace{c_1,c_2,\ldots,c_t}\rbrace)\cap{Supp(T)}}\vert>\vert{\lbrace{c_1,c_2,\ldots,c_t}\rbrace}\vert.$$
Note that  $J=\lbrace{s_1,s_2,\ldots,s_j}\rbrace\cup\left(N(\lbrace{c_1,c_2,\ldots,c_t}\rbrace)\cap{Supp(T)}\right)\cup\lbrace{n_1,n_2,\ldots,n_r}\rbrace$ is an independent set of $T$. Indeed, $N(Supp(T))\cap{\lbrace{n_1,n_2,\ldots,n_r}\rbrace}=\emptyset$  and $\lbrace{s_1,s_2,\ldots,s_j}\rbrace$ and $N(\lbrace{c_1,c_2,\ldots,c_t}\rbrace)\cap{Supp(T)}$ are independent sets, because  $\lbrace{s_1,s_2,\ldots,s_j}\rbrace\subseteq{Supp(T)}$, $N(\lbrace{c_1,c_2,\ldots,c_t}\rbrace)\cap{Supp(T)}\subseteq{Supp(T)}$ and $Supp(T)$ is an independent set by Theorem \ref{tindependent}. Moreover, note that $\lbrace{s_1,s_2,\ldots,s_j}\rbrace\cap\left(N(\lbrace{c_1,c_2,\ldots,c_t}\rbrace)\cap{Supp(T)}\right)=\emptyset$ otherwise $I$ would not be an independent set, then $\vert{J}\vert>{\vert{I}\vert}$, therefore, $I\notin\mathcal{I}(T)$.
\end{proof}

Theorem \ref{propalpha1} is one of the main results of this section. It gives
a closed formula for the independence number of unicyclic graphs $G$ of Type
$I$. This formula depends on the support and $N$-vertices of subtrees. It
means that using this formula we can compute the independence number
of unicyclic graphs of Type $I$ using linear algebra.

\begin{teore}\label{propalpha1}
If $G$ is a unicyclic graph of Type $I$ and $G\lbrace{v}\rbrace$ its pendant
tree such that $v\notin{Supp(G\lbrace{v}\rbrace)}$ then
$$\alpha{(G)}=\vert{Supp(G\lbrace{v}\rbrace)}\vert+\vert{Supp(G-G\lbrace{v}\rbrace)}\vert{+}\frac{\vert{V(\mathcal{F}_N(G\lbrace{v}\rbrace))}\vert{+}\vert{V(\mathcal{F}_N(G-G\lbrace{v}\rbrace))}\vert}{2}\\.$$
\end{teore}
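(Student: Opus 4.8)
The plan is to first notice that both $G\{v\}$ and $G-G\{v\}$ are trees. Indeed $G\{v\}$ is a pendant tree by definition, and since $v$ lies on the cycle $C$, deleting all of $G\{v\}$ (in particular the vertex $v$) breaks $C$ into a path; the remaining cycle vertices together with the other pendant trees stay connected and acyclic, so $G-G\{v\}$ is a tree. Applying Lemma \ref{t35} to each of these two trees, the claimed right-hand side is precisely $\alpha(G\{v\})+\alpha(G-G\{v\})$. Thus the whole statement reduces to the vertex-partition identity $\alpha(G)=\alpha(G\{v\})+\alpha(G-G\{v\})$, and I would prove the two inequalities separately.

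The inequality $\alpha(G)\le\alpha(G\{v\})+\alpha(G-G\{v\})$ is the routine direction. Since $V(G)$ is the disjoint union $V(G\{v\})\cup V(G-G\{v\})$, any independent set $I$ of $G$ restricts to independent sets $I\cap V(G\{v\})$ and $I\cap V(G-G\{v\})$ of the two induced subgraphs, whence $|I|=|I\cap V(G\{v\})|+|I\cap V(G-G\{v\})|\le\alpha(G\{v\})+\alpha(G-G\{v\})$. Taking $I$ maximum gives the bound.

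For the reverse inequality I would build a large independent set by gluing, and this is where the Type $I$ hypothesis enters. First I would record the structural fact that the only edges of $G$ joining $V(G\{v\})$ to $V(G-G\{v\})$ are the two cycle edges incident with $v$, so $v$ is the unique vertex of $G\{v\}$ having neighbours outside $G\{v\}$. Consequently, whenever $I_1$ is an independent set of $G\{v\}$ with $v\notin I_1$ and $I_2$ is an independent set of $G-G\{v\}$, the union $I_1\cup I_2$ is independent in $G$. The crux is therefore to produce a \emph{maximum} independent set $I_1$ of $G\{v\}$ that avoids $v$. Here I would use that the null decomposition partitions $V(G\{v\})$ into $Supp(G\{v\})$, $Core(G\{v\})$ and the $N$-vertices $V(\mathcal{F}_N(G\{v\}))$; these are disjoint because $Supp(G\{v\})$ is independent (Theorem \ref{tindependent}) and hence meets none of its neighbours, which constitute the core. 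Since $v\notin Supp(G\{v\})$, it follows that $v\in Core(G\{v\})$ or $v\in V(\mathcal{F}_N(G\{v\}))$. In the first case Lemma \ref{tcore} guarantees that no maximum independent set of $G\{v\}$ contains $v$; in the second, Lemma \ref{t30} directly supplies a maximum independent set of $G\{v\}$ omitting $v$. Either way a suitable $I_1$ exists, and choosing any $I_2\in\mathcal{I}(G-G\{v\})$ yields $\alpha(G)\ge|I_1|+|I_2|=\alpha(G\{v\})+\alpha(G-G\{v\})$.

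The main obstacle is exactly the gluing step: an arbitrary pair of maximum independent sets of the two trees need not combine into an independent set of $G$, since $v$ together with one of its cycle-neighbours might be selected simultaneously. The hypothesis that $G$ is of Type $I$ with $v\notin Supp(G\{v\})$ is what dissolves this difficulty, forcing the existence of a maximum independent set of $G\{v\}$ that leaves $v$ unused; the only genuine bookkeeping to verify is that $Supp$, $Core$ and the $N$-vertices indeed partition $V(G\{v\})$, so that $v\notin Supp(G\{v\})$ places $v$ in one of the two cases handled above.
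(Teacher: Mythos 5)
Your proposal is correct and follows essentially the same route as the paper's proof: both reduce the statement via Lemma \ref{t35} to $\alpha(G)=\alpha(G\lbrace{v}\rbrace)+\alpha(G-G\lbrace{v}\rbrace)$, and both obtain the key gluing step by using $v\notin Supp(G\lbrace{v}\rbrace)$ together with Lemmas \ref{t30} and \ref{tcore} to produce a maximum independent set of $G\lbrace{v}\rbrace$ avoiding $v$, with the reverse bound coming from the routine restriction argument (which the paper phrases as a contradiction). Your explicit checks that $G-G\lbrace{v}\rbrace$ is a tree and that $Supp$, $Core$ and the $N$-vertices partition $V(G\lbrace{v}\rbrace)$ are points the paper leaves implicit, and they are correct.
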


\begin{proof}
Note that there is an independent set
$I_1\in\mathcal{I}(G\lbrace{v}\rbrace)$ such that $v\notin{I_1}$. Indeed, if
$v\in{V(\mathcal{F}_N(G\lbrace{v}\rbrace))}\cup{Core(G\lbrace{v}\rbrace)}$
then by lemmas \ref{t30} and \ref{tcore} there is a
$I_1\in\mathcal{I}(G\lbrace{v}\rbrace)$ such that $v\notin{I_1}$. Let
$I_2\in\mathcal{I}(G-G\lbrace{v}\rbrace)$. Let
$u,w\in{N(v)\cap{V(G-G\lbrace{v}\rbrace)}}$. We will prove that
$I_1\cup{I_2}\in\mathcal{I}(G)$.

We notice that $I_1\cup{I_2}$ is an independent set in $G$. To see that, we
observe that the vertices of $I_1$ are not connected to each other, because
$I_1$ is an independent set. Similarly, we conclude the same for $I_2$.
Moreover, the only adjacencies between $G\lbrace{v}\rbrace$ and
$G-G\lbrace{v}\rbrace$ occur between vertices $v$ and $u$ and vertices $v$
and $w$. Since $v\notin{I_1}$, there is no possibility of adjacency between
vertices of $I_1$ and vertices of $I_2$. Suppose that
$I_1\cup{I_2}\notin\mathcal{I}(G)$, that is, there exists an independent set
$J$ in $G$ such that $\vert{J}\vert>\vert{I_1\cup{I_2}}\vert$. As
$V(G)=V(G\lbrace{v}\rbrace)\cup{V(G-G\lbrace{v}\rbrace)}$ there is a
$J_1\subseteq{V(G\lbrace{v}\rbrace)}$  and
$J_2\subseteq{V(G-G\lbrace{v}\rbrace)}$ such that $J=J_1\cup{J_2}$. We have that
\begin{eqnarray}\label{equa4}
\vert{I_1}\vert+\vert{I_2}\vert=\vert{I_1\cup{I_2}}\vert<{\vert{J}\vert}=\vert{J_1}\vert+\vert{J_2}\vert.
\end{eqnarray}
In this case, we see that $J_1$ and $J_2$ are independent sets in
$G\lbrace{v}\rbrace$ and $G-G\lbrace{v}\rbrace$, respectively. Thus we have
$\vert{J_1}\vert\leq{\vert{I_1}\vert}$ and
$\vert{J_2}\vert\leq{\vert{I_2}\vert}$, because
$I_1\in\mathcal{I}(G\lbrace{v}\rbrace)$ and
$I_2\in\mathcal{I}(G-G\lbrace{v}\rbrace)$. Therefore,
$\vert{J_1}\vert+\vert{J_2}\vert\leq\vert{I_1}\vert+\vert{I_2}\vert$, which
is a contradiction by \eqref{equa4}. Hence, $I_1\cup{I_2}\in\mathcal{I}(G)$.
By Lemma \ref{t35}, we have
\begin{eqnarray*}
\alpha{(G\lbrace{v}\rbrace)}&=&\vert{Supp(G\lbrace{v}\rbrace)}\vert{+}\frac{\vert{V(\mathcal{F}_N(G\lbrace{v}\rbrace))}\vert}{2}\mbox{ and}\\
\alpha{(G-G\lbrace{v}\rbrace)}&=&\vert{Supp(G-G\lbrace{v}\rbrace)}\vert{+}\frac{\vert{V(\mathcal{F}_N(G-G\lbrace{v}\rbrace))}\vert}{2}.
\end{eqnarray*}
Therefore, the independence number of $G$ is given by
\begin{eqnarray*}
\alpha{(G)}&=&\vert{I_1}\vert+\vert{I_2}\vert
           =\alpha{(G\lbrace{v}\rbrace)}+\alpha{(G-G\lbrace{v}\rbrace)}\\
           &=&\vert{Supp(G\lbrace{v}\rbrace)}\vert{+}\frac{\vert{V(\mathcal{F}_N(G\lbrace{v}\rbrace))}\vert}{2}+\vert{Supp(G-G\lbrace{v}\rbrace)}\vert{+}\frac{\vert{V(\mathcal{F}_N(G-G\lbrace{v}\rbrace))}\vert}{2}.\\
\end{eqnarray*}
\end{proof}

The following example is an application of Theorem \ref{propalpha1}.  Consider $G$ the unicyclic graph of Figure \ref{figur3}. We observe that $G$ is of Type $I$. Indeed,
$v\notin{Supp(G\lbrace{v}\rbrace)=\lbrace{g,e,d}\rbrace}$, then by
Proposition \ref{t4} we have that $G$ is a unicyclic graph of Type $I$. Moreover,
$Supp(G-G\lbrace{v}\rbrace)=\lbrace{a,b,i,j,z}\rbrace$,
$V(\mathcal{F}_N(G\lbrace{v}\rbrace))=\emptyset$ and
${V(\mathcal{F}_N(G-G\lbrace{v}\rbrace)}=\lbrace{w,c}\rbrace$.

\tikzstyle{vertex}=[circle,draw,minimum size=2pt,inner sep=2pt]
\tikzstyle{edge} = [draw,thick,-] \tikzstyle{matched edge} = [draw,line
width=1pt,-,color=red]
\begin{figure}[h!]
\begin{center}
\begin{scriptsize}
\begin{center}
\begin{tikzpicture}[scale=1.0,auto,swap]
\node[draw,circle,label=below left:] (1) at (0,0.5) {$v$};
\node[draw,circle,label=below left:] (2) at (-2,-1) {$x$};
\node[draw,rectangle, label=below left:] (3) at (-1,-1) {$z$};
\node[draw,circle,label=below left:] (4) at (1,-1) {$u$};
\node[draw,star,star points=9,star point ratio=0.75,label=below left:] (5) at (2,-1) {$c$};
\node[draw,rectangle,label=below left:] (6) at (0.5,-2) {$a$};
\node[draw,rectangle,label=below left:] (7) at (1.5,-2) {$b$};
\node[draw,star,star points=9,star point ratio=0.75,label=below left:] (8) at (3,-1) {$w$};
\node[draw,rectangle,label=below left:] (9) at (1,0.5) {$d$};
\node[draw,rectangle,label=below left:] (10) at (0,1.3) {$e$};
\node[draw,rectangle,label=below left:] (12) at (-1,0.5) {$g$};
\node[draw,rectangle,label=below left:] (14) at (-2.8,-1.5) {$i$};
\node[draw,rectangle,label=below left:] (15) at (-2.8,-0.5) {$j$};
\draw[dashed] (-2,-0.7) --(3.3,-0.7);
\draw[dashed] (-2,-0.7) --(-2.6,-0.2);
\draw[dashed] (-2.6,-0.2)--(-3.1,-0.2);
\draw[dashed] (-3.1,-0.2)--(-3.1,-1.8);
\draw[dashed] (-3.1,-1.8)--(-2.6,-1.8);
\draw[dashed] (-2.6,-1.8)--(-2,-1.3);
\draw[dashed] (-2,-1.3)--(0.2,-1.3);
\draw[dashed] (0.2,-1.3)--(0.2,-2.3);
\draw[dashed] (0.2,-2.3)--(1.8,-2.3);
\draw[dashed] (1.8,-2.3)--(1.8,-1.3);
\draw[dashed] (1.8,-1.3)--(3.3,-1.3);
\draw[dashed] (3.3,-1.3)--(3.3,-0.7);
\draw[dashed] (-1.3,0.2)--(1.3,0.2);
\draw[dashed] (-1.3,0.2)--(-1.3,0.8);
\draw[dashed] (-1.3,0.8)--(-0.3,0.8);
\draw[dashed] (-0.3,0.8)--(-0.3,1.6);
\draw[dashed] (-0.3,1.6)--(0.3,1.6);
\draw[dashed] (0.3,1.6)--(0.3,0.8);
\draw[dashed] (0.3,0.8)--(1.3,0.8);
\draw[dashed] (1.3,0.8)--(1.3,0.2);
\node at (0,-2.6) {$G-G\lbrace{v}\rbrace$};
\node at (0,1.9) {$G\lbrace{v}\rbrace$};
\foreach \from/\to in {2/3,3/4,4/5,4/6,4/7,5/8,1/9,1/10,1/12,2/14,2/15,1/8,1/4} {
 \draw (\from) -- (\to);}
 \foreach \source / \dest in {}
   \path[matched edge] (\source) -- (\dest);
\end{tikzpicture}
\end{center}
\end{scriptsize}
 \end{center}
\begin{center}
\caption{Unicyclic graph $G$ and its subtrees.}\label{figur3}
\end{center}
\end{figure}
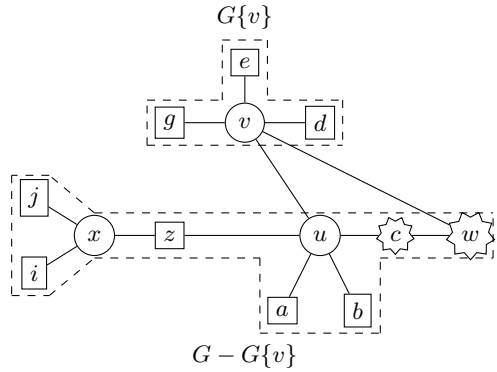

\noindent Therefore, by Theorem \ref{propalpha1}, we have that the independence number of $G$ is given by:
\begin{eqnarray*}
\alpha{(G)}&=&\vert{Supp(G\lbrace{v}\rbrace)}\vert+\vert{Supp(G-G\lbrace{v}\rbrace)}\vert{+}\frac{\vert{V(\mathcal{F}_N(G\lbrace{v}\rbrace))}\vert{+}\vert{V(\mathcal{F}_N(G-G\lbrace{v}\rbrace))}\vert}{2}\\
&=&3+5+\frac{2}{2}=9.
\end{eqnarray*}
We observe that $J=\lbrace{a,b,c,d,e,g,i,j,c}\rbrace$ is a maximum
independent set of $G$ and $\vert{J}\vert=9$.

\begin{lemat}\label{non supp}
Let $G$ be a unicyclic graph and $C$ its cycle. Let $G\lbrace{v}\rbrace$ be a
pendant tree such that $v\in{Supp(G\lbrace{v}\rbrace)}$. If
$u\in{N(v)\cap{V(G\lbrace{v}\rbrace)}}$ then $u\notin{Supp(G-C)}$.
\end{lemat}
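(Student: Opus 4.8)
The plan is to reduce the statement to a fact about the single pendant tree $G\{v\}$ and then settle it with a matching count. First I would locate the component of the forest $G-C$ that contains $u$. Since the only cycle vertex lying in the pendant tree $G\{v\}$ is $v$ itself, deleting $C$ from $G$ separates $u$ from everything outside $G\{v\}$; hence the component of $G-C$ containing $u$ is exactly the branch $T_u$ of $G\{v\}-v$ rooted at $u$. The adjacency matrix of the forest $G-C$ is block diagonal across its components, so $Supp(G-C)$ is the union of the supports of these components, and it suffices to prove $u\notin Supp(T_u)$. Applying Lemma \ref{t11} to the tree $T_u$, this is equivalent to showing that $u$ is saturated by \emph{every} maximum matching of $T_u$, i.e. $\nu(T_u-u)=\nu(T_u)-1$.

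Next I would set up the matching count on $T:=G\{v\}$. Let $T_1,\dots,T_k$ be the components of $T-v$, rooted at the neighbours $u_1,\dots,u_k$ of $v$ in $T$, ordered so that $T_1=T_u$ and $u_1=u$. Every matching of $T$ either leaves $v$ unsaturated or uses exactly one edge $\{v,u_i\}$; optimizing within each case gives
\[
\nu(T)=\max\Big\{\ \sum_{i=1}^{k}\nu(T_i),\ \max_{1\le i\le k}\Big(1+\nu(T_i-u_i)+\sum_{j\ne i}\nu(T_j)\Big)\ \Big\}.
\]
By hypothesis $v\in Supp(G\{v\})=EG(G\{v\})$ by Lemma \ref{t11}, so some maximum matching of $T$ misses $v$. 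Since the largest matching missing $v$ is obtained by matching each $T_i$ optimally and so has size $\sum_i\nu(T_i)$, this forces $\nu(T)=\sum_{i=1}^{k}\nu(T_i)$.

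Comparing this value with the $v$-saturated options in the displayed formula yields, for every $i$, the inequality $\sum_i\nu(T_i)\ge 1+\nu(T_i-u_i)+\sum_{j\ne i}\nu(T_j)$, that is $\nu(T_i-u_i)\le\nu(T_i)-1$. Since deleting a single vertex lowers the matching number by at most one, $\nu(T_i-u_i)\ge\nu(T_i)-1$ holds automatically, so in fact $\nu(T_i-u_i)=\nu(T_i)-1$ for all $i$. Taking $i=1$ gives $\nu(T_u-u)=\nu(T_u)-1$, which is precisely the condition that $u$ is saturated by every maximum matching of $T_u$, i.e. $u\notin EG(T_u)=Supp(T_u)$. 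Combined with the first paragraph, this gives $u\notin Supp(G-C)$.

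The routine points are the two bounds $\nu(T_i-u_i)\ge\nu(T_i)-1$ and the case analysis producing the formula for $\nu(T)$. The step I expect to require the most care is the two-way translation between support and maximum matchings via Lemma \ref{t11}: it must be invoked once on $G\{v\}$ (to extract a maximum matching missing $v$) and once on the smaller tree $T_u$ (to convert $\nu(T_u-u)=\nu(T_u)-1$ into the statement $u\notin Supp(T_u)$). One must also justify cleanly that $T_u$ is genuinely the connected component of the forest $G-C$ containing $u$, so that the support of $G-C$ splits over components as claimed.
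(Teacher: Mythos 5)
Your proof is correct, and it takes a genuinely different route from the paper's, although both rest on the same two pillars: the identification of the branches of $G\lbrace{v}\rbrace-v$ rooted at the neighbours of $v$ with connected components of $G-C$, and the double use of Lemma \ref{t11} (once on $G\lbrace{v}\rbrace$, once on the branch containing $u$). The paper argues by contradiction with a matching-gluing flavour: it first invokes Theorem \ref{tindependent} to conclude $u\notin{Supp(G\lbrace{v}\rbrace)}$ (the support is independent and $v\in{Supp(G\lbrace{v}\rbrace)}$, so the neighbour $u$ is excluded), observes that $\nu(G\lbrace{v}\rbrace)=\nu(G\lbrace{v}\rbrace-v)$ because some maximum matching misses $v$, and then, assuming $u\in{Supp(T_i)}$ for some branch, glues a maximum matching of that branch missing $u$ with maximum matchings of the other branches to produce a maximum matching of $G\lbrace{v}\rbrace$ missing $u$ --- contradicting the fact that $u$ must be saturated. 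You bypass Theorem \ref{tindependent} entirely and instead run a quantitative recursion at $v$, namely $\nu(T)=\max\lbrace{\sum_{i}\nu(T_i),\,\max_{i}(1+\nu(T_i-u_i)+\sum_{j\neq{i}}\nu(T_j))}\rbrace$: the hypothesis $v\in{Supp(G\lbrace{v}\rbrace)}$ pins down $\nu(T)=\sum_{i}\nu(T_i)$, and squeezing against the trivial bound $\nu(T_i-u_i)\geq\nu(T_i)-1$ gives $\nu(T_i-u_i)=\nu(T_i)-1$ for \emph{every} branch --- a slightly stronger conclusion than needed (every neighbour $u_i$ of $v$ is saturated by all maximum matchings of its branch, not just $u$), obtained without the independence of the support and without any matching-exchange step. (A pleasant by-product: if some branch were a single vertex, your inequality would read $0\leq{-1}$, correctly showing that $v\in{Supp(G\lbrace{v}\rbrace)}$ forbids leaf neighbours of $v$.) Your final point of care, that $Supp(G-C)$ splits as the union of the supports of its components, is easily discharged since the adjacency matrix of a forest is block diagonal over components and the null space is the corresponding direct sum; the paper uses the same fact implicitly when it writes $Supp(G\lbrace{v}\rbrace-v)=\bigcup_{i}Supp(T_i)$.
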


\begin{proof}
Let $G\lbrace{v}\rbrace-v=\bigcup\limits_{i=1}^{k}T_i$, where $T_i$ is a
connected component of $G\lbrace{v}\rbrace-v$. Let $M_i\in\mathcal{M}(T_i)$.
As $v\in{Supp({G\lbrace{v}\rbrace})}$ there is a
$M\in\mathcal{M}(G\lbrace{v}\rbrace)$  such that $M$ does not saturate $v$ by
Lemma \ref{t11}. We observe that  $u\in{N(v)\cap{V(G\lbrace{v}\rbrace)}}$,
then $u\notin{Supp({G\lbrace{v}\rbrace})}$, because
$Supp({G\lbrace{v}\rbrace})$ is an independent set of $G\lbrace{v}\rbrace$ by
Theorem \ref{tindependent}. Then $M\in\mathcal{M}(G\lbrace{v}\rbrace-v)$ and
$\nu(G\lbrace{v}\rbrace)=\nu(G\lbrace{v}\rbrace-v)$. Suppose
$u\in{Supp(G\lbrace{v}\rbrace-v)=\bigcup\limits_{i=1}^{k}Supp(T_i)}$, that
is, there is a $i$ such that $u\in{Supp(T_i)}$, then by Lemma \ref{t11} in
$T_i$  we obtain a  $M_i\in\mathcal{M}(T_i)$ does not saturate $u$. Note
that, $\bigcup\limits_{i=1}^kM_i\in\mathcal{M}(G\lbrace{v}\rbrace-v)$  and
$\bigcup\limits_{i=1}^kM_i$ does not saturate $u$. Then
$\bigcup\limits_{i=1}^kM_i\in\mathcal{M}(G\lbrace{v}\rbrace)$, because
$\nu(G\lbrace{v}\rbrace)=\nu(G\lbrace{v}\rbrace-v)$. Which is a
contradiction, because  $\bigcup\limits_{i=1}^kM_i$ does not saturate $u$ and
all maximum matching in $G\lbrace{v}\rbrace$ saturates $u$ because
$u\notin{Supp(G\lbrace{v}\rbrace)}$. Therefore,
$u\notin{Supp(G\lbrace{v}\rbrace-v)}$.	Since the connected components of
$G\lbrace{v}\rbrace-v$ are connected components of $G-C$ we have
$u\notin{Supp(G-C)}$.
\end{proof}

Theorem \ref{propalpha2} is a similar result for unicyclic graphs of Type
$II$ and gives a closed formula for the independence number of unicyclic
graphs of Type $II$.

\begin{teore}\label{propalpha2}
Let $G$ be a unicyclic graph and $C$ its cycle. Let
$G-C=\bigcup\limits^{k}_{i=1} {T_i}$, where $T_i$ is a connected component of
$G-C$. If $G$ is a unicyclic graph of Type $II$ then
$$\alpha{(G)}=\floor*{\frac{\vert{V(C)}\vert}{2}}+\sum\limits^{k}_{i=1}\vert{Supp(T_i)}\vert{+}
\frac{\vert{V(\mathcal{F}_N(T_i))}\vert}{2}.$$
\end{teore}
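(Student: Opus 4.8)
The plan is to prove the equivalent identity $\alpha(G)=\alpha(C)+\alpha(G-C)$ and then translate both terms into the stated formula. Since $G$ is unicyclic it has no chords, so the induced subgraph $G[V(C)]$ equals the cycle $C$ and $\alpha(C)=\lfloor |V(C)|/2\rfloor$. Since $G-C=\bigcup_{i=1}^k T_i$ is a forest, $\alpha(G-C)=\sum_{i=1}^k\alpha(T_i)$, and Lemma \ref{t35} rewrites each term as $\alpha(T_i)=|Supp(T_i)|+|V(\mathcal{F}_N(T_i))|/2$; together these give exactly the right-hand side of the theorem. Thus it suffices to prove the two inequalities $\alpha(G)\le\alpha(C)+\alpha(G-C)$ and $\alpha(G)\ge\alpha(C)+\alpha(G-C)$.

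The upper bound is the easy half and is valid for any graph. Given a maximum independent set $I$ of $G$, the sets $I\cap V(C)$ and $I\setminus V(C)$ are independent in $G[V(C)]=C$ and in $G-C$, respectively, so $\alpha(G)=|I|=|I\cap V(C)|+|I\setminus V(C)|\le\alpha(C)+\alpha(G-C)$.

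For the lower bound I would build a maximum independent set explicitly. Fix a maximum independent set $I_C$ of the cycle, with $|I_C|=\lfloor |V(C)|/2\rfloor$. Each component $T_i$ of $G-C$ is a component of $G\{v\}-v$ for a unique cycle vertex $v$, and since $T_i$ is a tree attached to $v$ by a single edge it has a unique attachment vertex $u_i\in N(v)\cap V(T_i)$. Because $G$ is of Type $II$, Corollary \ref{c3} gives $v\in Supp(G\{v\})$, so Lemma \ref{non supp} applies and yields $u_i\notin Supp(G-C)$, hence $u_i\notin Supp(T_i)$; therefore $u_i\in Core(T_i)\cup V(\mathcal{F}_N(T_i))$. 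In either case I can choose $I_{T_i}\in\mathcal{I}(T_i)$ avoiding $u_i$: if $u_i\in Core(T_i)$ then Lemma \ref{tcore} guarantees that no maximum independent set of $T_i$ contains $u_i$, while if $u_i\in V(\mathcal{F}_N(T_i))$ then Lemma \ref{t30} supplies a maximum independent set missing $u_i$.

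It then remains to verify that $I=I_C\cup\bigcup_{i=1}^k I_{T_i}$ is independent in $G$. Edges inside $C$ are avoided because $I_C$ is independent, edges inside each $T_i$ because $I_{T_i}$ is independent, and there are no edges between distinct components $T_i$ and $T_j$. The only remaining edges of $G$ join a cycle vertex to its attachment vertices, and each such edge meets $V(T_i)$ only in $u_i$, which was excluded from $I_{T_i}$ by construction; hence no edge has both endpoints in $I$. Consequently $\alpha(G)\ge|I|=\lfloor |V(C)|/2\rfloor+\sum_{i=1}^k\alpha(T_i)=\alpha(C)+\alpha(G-C)$, and combining with the upper bound finishes the proof. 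The crux, and the step I expect to be the main obstacle, is exactly this gluing: the construction succeeds only because the single attachment vertex of each pendant subtree is forced out of its support by Lemma \ref{non supp}, allowing the natural maximum independent sets of $C$ and of $G-C$ to be combined without any conflicting edge across the cut.
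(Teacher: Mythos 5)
Your proposal is correct and takes essentially the same route as the paper's proof: both use Corollary \ref{c3} and Lemma \ref{non supp} to force each attachment vertex $u_i$ out of $Supp(T_i)$, then invoke Lemmas \ref{tcore} and \ref{t30} (according to whether $u_i$ lies in $Core(T_i)$ or in $V(\mathcal{F}_N(T_i))$) to select maximum independent sets of the $T_i$ avoiding the $u_i$, glue them with a maximum independent set of $C$, and finish with Lemma \ref{t35}. Your two-inequality framing, with the upper bound obtained by restricting a maximum independent set of $G$ to $V(C)$ and to $G-C$, is just a cleaner repackaging of the paper's contradiction argument establishing maximality of the glued set.
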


\begin{proof}
Let $v_i\in{V(C)}$ and $u_i\in{T_i}$ such that $u_i\in{N(v_i)}$. By Lemma
\ref{non supp}, we have $u_i\notin{Supp(T_i)}$. As
$u_i\in{V(\mathcal{F}_N(T_i))}\cup{Core(T_i)}$ then, by lemmas \ref{t30} and
\ref{tcore}, we have a $J_i\in\mathcal{I}(T_i)$ such that $u_i\notin{J_i}$.
Consider $J_c\in\mathcal{I}(C)$ and define $I=J_c\cup\left(\bigcup\limits_{i=1}^{k}J_i\right)$. We will show that
$I\in\mathcal{I}(G)$.

First, we notice that $I$ is an independent set in $G$. Indeed, for all $i$ we
observe that the vertices of $J_i$ are not connected to each other, because
$J_i$ is an independent set. Similarly, we conclude the same for $J_c$.
Moreover, there is no chance that a vertex of $J_i$ is adjacent to a vertex
$J_{\ell}$, with $i\neq{\ell}$, since the vertices of the trees $T_i$ and
$T_{\ell} $ are not adjacent to each other. Now, we show that vertices of
$J_i$ are not connected to vertices of $J_c$. To see that, we observe that
the only adjacency that exists between $T_i$ and $C$ is the adjacency between
vertex $u_i$ and $v_i$, but since $u_i\notin{J_i}$, there is no possibility
of adjacency between vertices of $J_i$ and $J_c $.

Suppose now $I\notin\mathcal{I}(G)$, that is, there is an independent set $F$ in $G$
such that $\vert{F}\vert{>}{\vert{I}\vert}$. As
$V(G)=V(C)\cup\left(\bigcup\limits_{i=1}^{k}V(T_i)\right)$, we see that there
exist  $F_c\subseteq{V(C)}$ and $F_i\subseteq{V(T_i)}$ such that
$F=F_c\cup\left(\bigcup\limits_{i=1}^{k}F_i\right)$. Thus we have
\begin{eqnarray}\label{equa3}
\vert{F_c}\vert+\sum\limits_{i=1}^{k}\vert{F_i}\vert=\vert{F}\vert{>}\vert{I}\vert=\vert{J_c}\vert+\sum\limits_{i=1}^{k}\vert{J_i}\vert.
\end{eqnarray}
As $F_c$ and $F_i$ are independent sets of $C$ and $T_i$, respectively, we
have $\vert{F_c}\vert\leq{\vert{J_c}\vert}$ and
$\vert{F_i}\vert\leq{\vert{J_i}\vert}$, because $J_c\in\mathcal{I}(C)$ and
$J_i\in\mathcal{I}(T_i)$. Thus, we have
$\vert{F_c}\vert+\sum\limits_{i=1}^{k}\vert{F_i}
\vert\leq\vert{J_c}\vert+\sum\limits_{i=1}^{k}\vert{J_i}\vert$, which is a
contradiction by \eqref{equa3}. Therefore, $I\in\mathcal{I}(G)$. We observe
that $\alpha(C)=\floor*{\frac{\vert{V(C)}\vert}{2}}$ and, by Lemma \ref{t35},
we have, for all $i$,
$$\alpha{(T_i)}=\vert{Supp(T_i)}\vert{+} \frac{\vert{V(\mathcal{F}_N(T_i))}\vert}{2}.$$
Therefore, the independence number of $G$ is given by
\begin{eqnarray*}
\alpha{(G)}&=&\vert{J_c}\vert+\sum\limits_{i=1}^{k}\vert{J_i}\vert
           =\alpha{(C)}+\sum\limits^{k}_{i=1}\alpha(T_i)\\
           &=&\floor*{\frac{\vert{V(C)}\vert}{2}}+\sum\limits^{k}_{i=1}\vert{Supp(T_i)}\vert{+}\frac{\vert{V(\mathcal{F}_N(T_i))}\vert}{2}.\\
\end{eqnarray*}
\end{proof}

\tikzstyle{vertex}=[circle,draw,minimum size=2pt,inner sep=2pt]
\tikzstyle{edge} = [draw,thick,-]
\tikzstyle{matched edge} = [draw,line width=1pt,-,color=red]
\begin{figure}[h!]
\begin{center}
\begin{scriptsize}
\begin{center}
\begin{tikzpicture}[scale=1.0,auto,swap]
\node[draw,circle,label=below left:] (1) at (0,0) {$v$};
\node[draw,circle,label=below left:] (2) at (1,1) {$w$};
\node[draw,circle,label=below left:] (3) at (2,0) {$u$};
\node[draw,circle,label=below left:] (4) at (1,2) {$a$};
\node[draw,circle,label=below left:] (5) at (0,2) {$b$};
\node[draw,circle,label=below left:] (6) at (2,2) {$c$};
\node[draw,circle,label=below left:] (7) at (1,3) {$e$};
\node[draw,circle,label=below left:] (8) at (2,3) {$d$};
\node[draw,circle,label=below left:] (9) at (3,2) {$f$};
\node[draw,circle,label=below left:] (10) at (-1,2) {$g$};
\node[draw,circle,label=below left:] (11) at (0,3) {$h$};
\node[draw,circle,label=below left:] (12) at (-1,3) {$i$};
\node[draw,circle,label=below left:] (13) at (3,0) {$j$};
\node[draw,circle,label=below left:] (14) at (3,1) {$\ell$};
\node[draw,circle,label=below left:] (15) at (-1,1) {$m$};
\node[draw,circle,label=below left:] (16) at (-1,0) {$n$};
\node[draw,circle,label=below left:] (17) at (-2,1) {$o$};
\node[draw,circle,label=below left:] (18) at (-2,0) {$p$};
\node[draw,star,star points=9,star point ratio=0.6,label=below left:] (19) at (6,1) {$m$};
\node[draw,star,star points=9,star point ratio=0.6,label=below left:] (20) at (6,0) {$n$};
\node[draw,star,star points=9,star point ratio=0.6,label=below left:] (21) at (5,1) {$o$};
\node[draw,star,star points=9,star point ratio=0.6,label=below left:] (22) at (5,0) {$p$};
\node[draw,star,star points=9,star point ratio=0.6,label=below left:] (23) at (10,0) {$j$};
\node[draw,star,star points=9,star point ratio=0.6,label=below left:] (24) at (10,1) {$\ell$};
\node[draw,circle,label=below left:] (26) at (9,2) {$c$};
\node[draw,rectangle,label=below left:] (27) at (9,3) {$d$};
\node[draw,rectangle,label=below left:] (28) at (10,2) {$f$};
\node[draw,star,star points=9,star point ratio=0.6,label=below left:] (29) at (8,2) {$a$};
\node[draw,star,star points=9,star point ratio=0.6,label=below left:] (30) at (8,3) {$e$};
\node[draw,rectangle,label=below left:] (31) at (6,2) {$g$};
\node[draw,rectangle,label=below left:] (32) at (7,3) {$h$};
\node[draw,rectangle,label=below left:] (33) at (6,3) {$i$};
\node[draw,circle,label=below left:] (34) at (7,2) {$b$};
\draw[dashed] (1.7,0.3) --(2.6,0.3);
\draw[dashed] (2.6,0.3) --(2.6,1.4);
\draw[dashed] (2.6,1.4) --(3.4,1.4);
\draw[dashed] (3.4,1.4) --(3.4,-0.4);
\draw[dashed] (3.4,-0.4) --(1.7,-0.4);
\draw[dashed] (1.7,-0.4) --(1.7,0.3);
\draw[dashed] (-2.4,-0.4) --(-2.4,1.4);
\draw[dashed] (-2.4,-0.4) --(0.4,-0.4);
\draw[dashed] (0.4,-0.4) --(0.4,0.4);
\draw[dashed] (0.4,0.4) --(-0.9,1.4);
\draw[dashed] (-0.9,1.4) --(-2.4,1.4);
\draw[dashed] (-1.4,3.4) --(2.4,3.4);
\draw[dashed] (-1.4,3.4) --(-1.4,1.6);
\draw[dashed] (-1.4,1.6) --(0,1.6);
\draw[dashed] (0,1.6) --(0.8,0.6);
\draw[dashed] (0.8,0.6) --(1.3,0.6);
\draw[dashed] (1.3,0.6) --(2,1.6);
\draw[dashed] (2,1.6) --(3.4,1.6);
\draw[dashed] (3.4,1.6) --(3.4,2.4);
\draw[dashed] (3.4,2.4) --(2.4,2.4);
\draw[dashed] (2.4,2.4) --(2.4,3.4);
\draw[dashed] (5.6,3.4) --(7.4,3.4);
\draw[dashed] (7.4,3.4) --(7.4,1.6);
\draw[dashed] (7.4,1.6) --(5.6,1.6);
\draw[dashed] (5.6,1.6) --(5.6,3.4);
\draw[dashed] (7.6,3.4) --(7.6,1.6);
\draw[dashed] (7.6,1.6) --(8.4,1.6);
\draw[dashed] (8.4,1.6) --(8.4,3.4);
\draw[dashed] (8.4,3.4) --(7.6,3.4);
\draw[dashed] (8.6,3.4) --(8.6,1.6);
\draw[dashed] (8.6,1.6) --(10.4,1.6);
\draw[dashed] (10.4,1.6) --(10.4,2.4);
\draw[dashed] (10.4,2.4) --(9.4,2.4);
\draw[dashed] (9.4,2.4) --(9.4,3.4);
\draw[dashed] (9.4,3.4) --(8.6,3.4);
\draw[dashed] (9.6,1.4) --(10.4,1.4);
\draw[dashed] (10.4,1.4) --(10.4,-0.4);
\draw[dashed] (10.4,-0.4) --(9.6,-0.4);
\draw[dashed] (9.6,-0.4) --(9.6,1.4);
\draw[dashed] (4.6,1.4) --(6.4,1.4);
\draw[dashed] (6.4,1.4) --(6.4,-0.4);
\draw[dashed] (6.4,-0.4) --(4.6,-0.4);
\draw[dashed] (4.6,-0.4) --(4.6,1.4);
\node at (1,-1.3) {$G$};
\node at (8,-1.3) {$G-C_3$};
\node at (1,3.7) {$G\lbrace{w}\rbrace$};
\node at (-1,-0.6) {$G\lbrace{v}\rbrace$};
\node at (2.5,-0.6) {$G\lbrace{u}\rbrace$};
\node at (6.5,3.7) {$T_1$};
\node at (8,3.7) {$T_2$};
\node at (9,3.7) {$T_3$};
\node at (10,-0.6) {$T_4$};
\node at (5.5,-0.6) {$T_5$};
\foreach \from/\to in {1/2,2/3,1/3,2/4,2/5,2/6,4/7,6/8,6/9,5/10,5/11,5/12,3/13,14/13,1/15,15/16,16/17,17/18,19/20,20/21,21/22,23/24,26/27,26/28,29/30,34/31,34/32,34/33} {
 \draw (\from) -- (\to);}
 \foreach \source / \dest in {}
   \path[matched edge] (\source) -- (\dest);
\end{tikzpicture}
\caption{Unicyclic graph of Type $II$ and the support of its subtrees.}\label{f172}
\end{center}
\end{scriptsize}
 \end{center}
\end{figure}
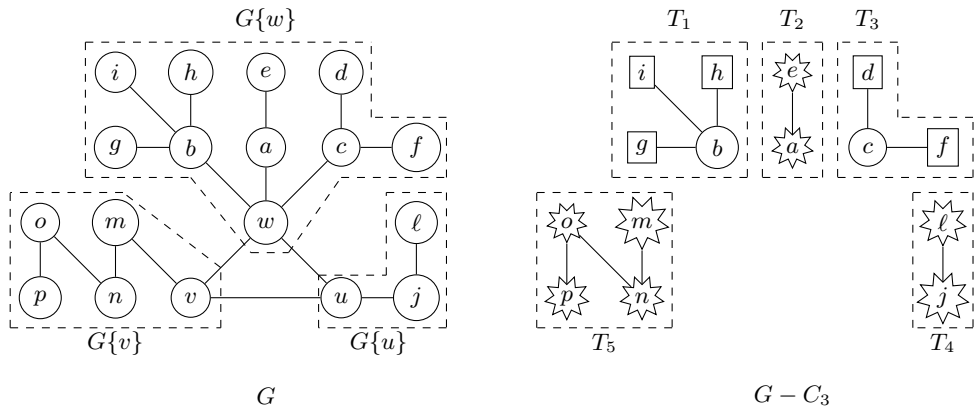

As an example, consider $G$ the unicyclic graph of Figure \ref{f172}. We
first notice that $G$ is a unicyclic graph of Type $II$, because
$v\in{Supp(G\lbrace{v}\rbrace)}=\lbrace{n,p,v}\rbrace$,
$w\in{Supp(G\lbrace{w}\rbrace)}=\lbrace{e,g,h,i,w,d,f}\rbrace$ and
$u\in{Supp(G\lbrace{u}\rbrace)}=\lbrace{u,\ell}\rbrace$. Then by Corollary
\ref{c3} we obtain $G$ is a unicyclic graph of Type $II$. Moreover, we have
$G-C_3=\bigcup\limits_{i=1}^{5}T_i$, where
$T_1=G{\langle\lbrace{b,g,h,i}\rbrace\rangle}$,
$T_2=G{\langle\lbrace{a,e}\rbrace\rangle}$,
$T_3=G{\langle\lbrace{c,f,d}\rbrace\rangle}$,
$T_4=G{\langle\lbrace{j,\ell}\rbrace\rangle}$ and
$T_5=G{\langle\lbrace{m,n,o,p}\rbrace\rangle}$ (see Figure \ref{f172}).
\\ Since $T_2$, $T_4$ and $T_5$ have perfect matching, so they are non-singular
and have empty support.  $T_1$ and $T_3$ do not have perfect matching, so
they are singular and computing their supports we obtain the Table
\ref{tab1}.

\begin{center}
\begin{table}[h]
\begin{tabular}{|l|l|l|}
  \hline
   $G-C_3$  & Support & $N$-vertices\\
  \hline
  $T_1$ & $Supp(T_1)=\lbrace{g,h,i}\rbrace$ &  $V(\mathcal{F}_N(T_1))=\emptyset$\\
  \hline
  $T_2$ & $Supp(T_2)=\emptyset$ &  $V(\mathcal{F}_N(T_2))=\lbrace{a,e}\rbrace$\\
  \hline
  $T_3$ & $Supp(T_3)=\lbrace{d,f}\rbrace$  &$V(\mathcal{F}_N(T_3))=\emptyset$ \\
  \hline
  $T_4$ &  $Supp(T_4)=\emptyset$  &$V(\mathcal{F}_N(T_4))=\lbrace{j,\ell}\rbrace$\\
  \hline
  $T_5$ &  $Supp(T_5)=\emptyset$  &$V(\mathcal{F}_N(T_5))=\lbrace{m,n,o,p}\rbrace$ \\
  \hline
\end{tabular}
\caption{Support and $N$-vertices of the subtrees $T_1$, $T_2$, $T_3$, $T_4$ and $T_5$.}\label{tab1}
\end{table}
\end{center}

\noindent Therefore, by Theorem \ref{propalpha2} we have that the independence
number of $G$ is given by:
\begin{eqnarray*}
\alpha{(G)}&=&\floor*{\frac{\vert{V(C)}\vert}{2}}+\sum\limits^{k}_{i=1}\vert{Supp(T_i)}\vert{+}\frac{\vert{V(\mathcal{F}_N(T_i))}\vert}{2}\\
&=&\floor*{\frac{3}{2}}+3+\frac{2}{2}+2+\frac{4}{2}+\frac{2}{2}=10.
\end{eqnarray*}
\noindent We observe that $I=\lbrace{g,h,i,e,d,f,p,n,\ell,v}\rbrace$ is a
maximum independent set of $G$ and $\vert{I}\vert=10$.

\section{Matching number of Unicyclic graphs}\label{sec:nu}
In this section we obtain closed formulas for the matching
number of unicyclic graphs.
\begin{defn}
Let $M$ be a matching in the graph $G$. An $M$-alternating path is a path
that alternates edges in $M$ and edges that are not in $M$. An $M$-augmenting
path is an $M$-alternating path, if it begins and ends at vertices non
saturated by $M$.
\end{defn}

Consider the matching
$M=\lbrace{\lbrace{d,c}\rbrace,\lbrace{a,b}\rbrace,\lbrace{v,w}\rbrace}\rbrace$
in graph $G$ of Figure \ref{fm1}. As the path $P_1=(e,d,c,b,a)$ is an
$M$-alternating path because its edges alternate outside and within the
matching $M$. Now the path $P_2=(u,v,w,o)$ is $M$-augmenting, because it is
an $M$-alternating path and starts and ends at vertices non saturated by $M$
(vertices u and o).

\usetikzlibrary{shapes,snakes}
\tikzstyle{vertex}=[circle,draw,minimum size=5pt,inner sep=10pt]
\tikzstyle{edge} = [draw,thick,-]
\tikzstyle{matched edge} = [draw,snake=zigzag,line width=1pt,-]
\begin{figure}[h!]
\begin{center}
\begin{scriptsize}
\begin{center}
\begin{tikzpicture}[scale=1,auto,swap]
\node[draw,circle,label=below left:] (1) at (0,0) {$u$};
\node[draw,circle,label=below left:] (2) at (0,1) {$v$};
\node[draw,circle,label=below left:] (3) at (1,1) {$w$};
\node[draw,circle,label=below left:] (4) at (1,0) {$x$};
\node[draw,circle,label=below left:] (5) at (-1,2) {$a$};
\node[draw,circle,label=below left:] (6) at (-1,1) {$b$};
\node[draw,circle,label=below left:] (7) at (-2,2) {$c$};
\node[draw,circle,label=below left:] (8) at (-2,1) {$d$};
\node[draw,circle,label=below left:] (18) at (2,1) {$o$};
\node[draw,circle,label=below left:] (19) at (1,2) {$k$};
\node[draw,circle,label=below left:] (20) at (2,2) {$p$};
\node[draw,circle,label=below left:] (21) at (3,2) {$q$};
\node[draw,circle,label=below left:] (22) at (-3,1) {$e$};
\draw[dashed](-0.3,1.3)--(2.3,1.3);
\draw[dashed](-0.3,1.3)--(-0.3,-0.3);
\draw[dashed](-0.3,-0.3)--(0.33,-0.3);
\draw[dashed](0.33,-0.3)--(0.33,0.7);
\draw[dashed](0.33,0.7)--(2.3,0.7);
\draw[dashed](2.3,0.7)--(2.3,1.3);
\draw[dashed](-3.3,1.3)--(-2.3,1.3);
\draw[dashed](-2.3,1.3)--(-2.3,2.3);
\draw[dashed](-2.3,2.3)--(-1.8,2.3);
\draw[dashed](-1.8,2.3)--(-1.3,1.7);
\draw[dashed](-1.3,1.7)--(-1.3,2.3);
\draw[dashed](-1.3,2.3)--(-0.7,2.3);
\draw[dashed](-0.7,2.3)--(-0.7,0.7);
\draw[dashed](-0.7,0.7)--(-1.3,0.7);
\draw[dashed](-1.3,0.7)--(-1.7,1.3);
\draw[dashed](-1.7,1.3)--(-1.7,0.7);
\draw[dashed](-1.7,0.7)--(-3.3,0.7);
\draw[dashed](-3.3,0.7)--(-3.3,1.3);
\node at (0.5,-1) {$G$};
\node at (0,-0.5) {$P_2$};
\node at (-3.5,1) {$P_1$};
\foreach \from/\to in {1/2,2/3,3/4,4/1,2/5,5/6,6/7,7/8,18/19,18/20,18/21,3/18,8/22,7/8,5/6,2/3} {
 \draw (\from) -- (\to);}
 \foreach \source / \dest in {7/8,5/6,2/3}
   \path[matched edge] (\source) -- (\dest);
\end{tikzpicture}
\end{center}
\end{scriptsize}
 \end{center}
\caption{$M$-alternating path and $M$-augmenting path.}\label{fm1}
\end{figure}
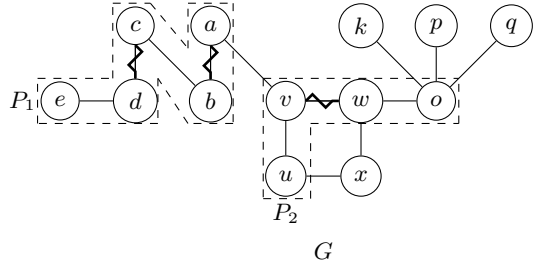
The following is a classic result, it characterizes maximum matchings in a
graph $G$.
\begin{lemat}(Berge, 1957)\cite{berge}\label{tmax}
A matching $M$ is maximum in $G$ if and only if $G$ does not have an
$M$-augmenting path.
\end{lemat}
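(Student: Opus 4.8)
The plan is to prove both implications by contraposition, using the symmetric difference of two matchings as the central tool.

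First I would establish that if $G$ has an $M$-augmenting path then $M$ is not maximum. Let $P=(x_0,x_1,\ldots,x_{2k+1})$ be an $M$-augmenting path. By definition its endpoints $x_0$ and $x_{2k+1}$ are non-saturated by $M$, and its edges alternate, beginning and ending with edges not in $M$. Hence $P$ contains exactly $k+1$ edges outside $M$ and $k$ edges of $M$. Setting $M'=M\triangle E(P)$ swaps these roles along $P$ while leaving the rest of $M$ untouched; since the two endpoints of $P$ were free, $M'$ is again a matching, and $|M'|=|M|-k+(k+1)=|M|+1>|M|$. Thus $M$ is not maximum.

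For the converse I would show the contrapositive: if $M$ is not maximum then $G$ has an $M$-augmenting path. Choose a matching $M^\ast$ with $|M^\ast|>|M|$ and consider the spanning subgraph $H$ on $V(G)$ with edge set $M\triangle M^\ast$. Every vertex of $H$ is incident to at most one edge of $M$ and at most one edge of $M^\ast$, so $\deg_H(v)\le 2$ for all $v$. Consequently each connected component of $H$ is either a single path or a cycle, and along any such path or cycle the edges alternate between $M$ and $M^\ast$, since two edges of the same matching cannot share a vertex. In particular every cycle component has even length and contributes equally many edges of $M$ and of $M^\ast$.

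The key step is then a counting argument. Because $|M^\ast|>|M|$, the edges of $M^\ast$ outnumber those of $M$ in $H$, and since cycles contribute equally from each matching, some path component $Q$ must contain strictly more edges of $M^\ast$ than of $M$. An alternating path can have one more edge of $M^\ast$ than of $M$ only when both of its end-edges belong to $M^\ast$; its two endpoints are therefore covered by $M^\ast$ but, having no incident $M$-edge in $H$, are non-saturated by $M$. Hence $Q$ is an $M$-augmenting path in $G$, which proves the contrapositive. The main obstacle I expect is precisely this structural analysis of $H=M\triangle M^\ast$: one must argue carefully that maximum degree two forces the components to be paths and even cycles, that alternation is forced, and that the global inequality $|M^\ast|>|M|$ localises to a single path whose end-edges lie in $M^\ast$. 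The augmentation direction is comparatively routine once the symmetric-difference bookkeeping is in place.
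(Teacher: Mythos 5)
Your proof is correct, and it is the classical symmetric-difference argument (essentially Berge's original proof, as found in standard textbooks). Note that the paper itself does not prove this lemma: it is stated as a known result with a citation to Berge (1957), so there is no internal proof to compare against; your write-up would in fact supply what the paper leaves to the literature. One small step worth making explicit in the converse direction: when you conclude that an endpoint $v$ of the path component $Q$ is non-saturated by $M$, you argue that $v$ has no incident $M$-edge \emph{in $H$}; but an $M$-edge at $v$ lying outside $H$ would belong to $M\cap M^\ast$, and since the terminal edge of $Q$ at $v$ lies in $M^\ast\setminus M$, this would give $v$ two incident edges of $M^\ast$, contradicting that $M^\ast$ is a matching. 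With that one-line observation added, the argument is complete.
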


We now give a closed formula for the matching number of unicyclic graphs $G$
of Type $I$. This formula depends on the core and $N$-vertices of subtrees.

\begin{teore}\label{propnu1} If $G$ is a unicyclic
graph of Type $I$ and $G\lbrace{v}\rbrace$ its pendant tree such that
$v\notin{Supp(G\lbrace{v}\rbrace)}$ then
$$\nu(G)=\vert{Core(G\lbrace{v}\rbrace)}\vert{+}\vert{Core(G-G\lbrace{v}\rbrace)}\vert{+}\frac{\vert{V(\mathcal{F}_N(G\lbrace{v}\rbrace))}\vert+\vert{V(\mathcal{F}_N(G-G\lbrace{v}\rbrace))}\vert}{2}.$$
\end{teore}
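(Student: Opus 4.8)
The plan is to establish the edge-count identity
\[
\nu(G)=\nu(G\lbrace v\rbrace)+\nu(G-G\lbrace v\rbrace),
\]
and then to read off the stated formula by applying the tree identity of Lemma \ref{t35} to each of the two trees on the right. First I record that $G-G\lbrace v\rbrace$ is indeed a tree: deleting the pendant tree $G\lbrace v\rbrace$ removes the cycle vertex $v$, so the cycle becomes a path and the remaining pendant trees hang off that path, leaving a connected acyclic graph to which Lemma \ref{t35} applies. I also note the key consequence of the hypothesis: since $v\notin{Supp(G\lbrace v\rbrace)}$, Lemma \ref{t11} gives $v\notin{EG(G\lbrace v\rbrace)}$, so $v$ is saturated by every maximum matching of $G\lbrace v\rbrace$ (this is exactly $v$ being matched, in agreement with Proposition \ref{t4}).

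For the inequality $\nu(G)\ge\nu(G\lbrace v\rbrace)+\nu(G-G\lbrace v\rbrace)$, I take maximum matchings $M_1\in\mathcal{M}(G\lbrace v\rbrace)$ and $M_2\in\mathcal{M}(G-G\lbrace v\rbrace)$. Because $V(G\lbrace v\rbrace)$ and $V(G-G\lbrace v\rbrace)$ are disjoint and every edge of $G$ joining the two parts is incident to $v$, the set $M_1\cup{M_2}$ is a matching of $G$ of size $\nu(G\lbrace v\rbrace)+\nu(G-G\lbrace v\rbrace)$, giving the bound.

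The reverse inequality is the heart of the argument and the one step where Type $I$ is essential. Let $M\in\mathcal{M}(G)$ and write $M$ as the disjoint union of $M\cap{E(G\lbrace v\rbrace)}$, $M\cap{E(G-G\lbrace v\rbrace)}$, and the set $M_{\times}$ of cross edges of $M$; every cross edge is incident to $v$, so $\vert{M_{\times}}\vert\le 1$. If $M_{\times}=\emptyset$, then the two restrictions are matchings of $G\lbrace v\rbrace$ and $G-G\lbrace v\rbrace$, whence $\vert{M}\vert\le\nu(G\lbrace v\rbrace)+\nu(G-G\lbrace v\rbrace)$. If $\vert{M_{\times}}\vert=1$, the cross edge saturates $v$, so $M\cap{E(G\lbrace v\rbrace)}$ is a matching of $G\lbrace v\rbrace$ that does \emph{not} saturate $v$; since $v$ is matched in $G\lbrace v\rbrace$, such a matching cannot be maximum, hence $\vert{M\cap{E(G\lbrace v\rbrace)}}\vert\le\nu(G\lbrace v\rbrace)-1$. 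Adding the one cross edge and the at most $\nu(G-G\lbrace v\rbrace)$ edges inside $G-G\lbrace v\rbrace$ again yields $\vert{M}\vert\le\nu(G\lbrace v\rbrace)+\nu(G-G\lbrace v\rbrace)$, completing the identity.

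With the identity in hand I finish by applying Lemma \ref{t35} in the form $\nu(T)=\vert{Core(T)}\vert+\frac{\vert{V(\mathcal{F}_N(T))}\vert}{2}$ to $T=G\lbrace v\rbrace$ and to $T=G-G\lbrace v\rbrace$, and summing. The main obstacle is the cross-edge case above: everything hinges on forbidding a maximum matching of $G$ that pairs $v$ outward while still attaining $\nu(G\lbrace v\rbrace)$ edges inside $G\lbrace v\rbrace$, and this is precisely what $v\notin{Supp(G\lbrace v\rbrace)}$ rules out via Lemma \ref{t11}; without the Type $I$ hypothesis the decomposition would fail by one unit.
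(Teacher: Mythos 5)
Your proof is correct, but it takes a genuinely different route from the paper's. The paper establishes that $M_1\cup M_2$ is a maximum matching of $G$ by contradiction via Berge's theorem (Lemma \ref{tmax}): assuming an $M$-augmenting path $P$ exists, it argues that $P$ must cross exactly one of the cycle edges at $v$, performs the alternating swap $M'=(M\cup B_2)\setminus B_1$ along $P$, and derives a contradiction through the edge-count computation \eqref{equa2}, which produces a matching of $G\lbrace v\rbrace$ of size $\vert M_1\vert$ that avoids $v$ --- impossible since $v\notin Supp(G\lbrace v\rbrace)=EG(G\lbrace v\rbrace)$ by Lemma \ref{t11}. You instead prove the identity $\nu(G)=\nu(G\lbrace v\rbrace)+\nu(G-G\lbrace v\rbrace)$ by two inequalities: the easy direction by taking the union of maximum matchings of the two parts, and the hard direction by decomposing an arbitrary maximum matching $M$ of $G$ into its two restrictions plus at most one cross edge (all cross edges being incident to $v$), observing that a cross edge forces $M\cap E(G\lbrace v\rbrace)$ to miss $v$ and hence, again by Lemma \ref{t11}, to have size at most $\nu(G\lbrace v\rbrace)-1$, which the cross edge exactly compensates. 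Both arguments pivot on the same key fact --- no maximum matching of $G\lbrace v\rbrace$ avoids $v$ --- but your direct counting dispenses with Berge's theorem and the somewhat delicate swap computation, making the argument shorter and more elementary, at the cost of not exhibiting the augmenting-path structure the paper works through. Your preliminary remark that $G-G\lbrace v\rbrace$ is a tree (so that Lemma \ref{t35} applies to it) is a point the paper leaves implicit, and is a welcome addition.
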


\begin{proof}
Let $M_{1}\in\mathcal{M}(G{\lbrace{v}\rbrace})$ and
$M_2\in{\mathcal{M}(G-G{\lbrace{v}\rbrace})}$. Let
$u,w\in{N(v)\cap{V(G-G\lbrace{v}\rbrace)}}$ and $M=M_1\cup{M_2}$. We will
prove that $M\in\mathcal{M}(G)$. Suppose that $M\notin\mathcal{M}(G)$. Hence,
by Lemma \ref{tmax}, there is an $M$-augmenting path, denoted by $P$. Notice
that $P$ is neither totally contained in $G{\lbrace{v}\rbrace}$ nor totally
contained in  $G-G\lbrace{v}\rbrace$, otherwise $P$ would be $M_1$-augmenting
or $M_2$-augmenting, which is a contradiction, because $M_{1}\in
\mathcal{M}(G{\lbrace{v}\rbrace})$ and $M_2\in
{\mathcal{M}(G-G{\lbrace{v}\rbrace})}$, respectively. Moreover $P$ does not
contain the edges $\lbrace{u,v}\rbrace$ and the edge $\lbrace{w,v}\rbrace$
simultaneous, because $\lbrace{u,v}\rbrace,\lbrace{w,v}\rbrace\notin{M}$.
Then $P$ starts at a vertex $a_1$ of $G{\lbrace{v}\rbrace}$  and ends at a
vertex $b_{2s}$ of $G-G{\lbrace{v}\rbrace}$.  Now we notice that the path $P$
contains the edge $\lbrace{u,v}\rbrace$ or the edge $\lbrace{w,v}\rbrace$.
Suppose that $P$ contains the edge $\lbrace{u,v}\rbrace$, then we have
$P=(a_1,a_2,\ldots,a_{2k},v,u,b_1,b_2,\ldots,b_{2s})$(see Figure \ref{fm5}).

\usetikzlibrary{shapes,snakes}
\tikzstyle{vertex}=[circle,draw,minimum size=5pt,inner sep=10pt]
\tikzstyle{edge} = [draw,thick,-]
\tikzstyle{matched edge} = [draw,snake=zigzag,line width=1pt,-]
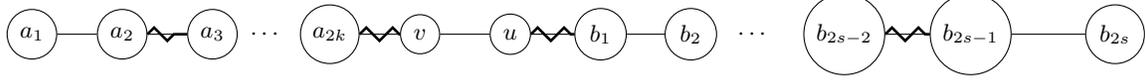
\begin{figure}[h!]
\begin{scriptsize}
\begin{center}
\begin{tikzpicture}[scale=1.2,auto,swap]
\node[draw,circle,label=below left:] (1) at (-2,0) {$a_{1}$};
\node[draw,circle,label=below left:] (2) at (-1,0) {$a_{2}$};
\node[draw,circle,label=below left:] (3) at (0,0) {$a_{3}$};
\node[draw,circle,label=below left:] (4) at (1.3,0) {$a_{2k}$};
\node[draw,circle,label=below left:] (5) at (2.3,0) {$v$};
\node[draw,circle,label=below left:] (6) at (3.3,0) {$u$};
\node[draw,circle,label=below left:] (7) at (4.3,0) {$b_{1}$};
\node[draw,circle,label=below left:] (8) at (5.3,0) {$b_{2}$};
\node[draw,circle,label=below left:] (9) at (7,0) {$b_{2s-2}$};
\node[draw,circle,label=below left:] (10) at (8.4,0) {$b_{2s-1}$};
\node[draw,circle,label=below left:] (11) at (10,0) {$b_{2s}$};
\node at (0.6,0) {$\cdots$};
\node at (6,0) {$\cdots$};
\foreach \from/\to in {1/2,5/6,7/8,10/11,2/3,4/5,6/7,9/10}{
 \draw (\from) -- (\to);}
 \foreach \source / \dest in {2/3,4/5,6/7,9/10}
   \path[matched edge] (\source) -- (\dest);
\end{tikzpicture}
\end{center}
\end{scriptsize}
\caption{$M$-augmenting path $P$.}\label{fm5}
\end{figure}

\noindent Let
$B_1=\lbrace{\lbrace{a_{2k},v}\rbrace,\lbrace{u,b_1}\rbrace}\rbrace\cup\bigcup\limits_{i=1}^{k-1}\lbrace\lbrace{a_{2i},a_{2i+1}}\rbrace\rbrace\cup\bigcup\limits_{j=1}^{s-1}\lbrace\lbrace{b_{2j},b_{2j+1}}\rbrace\rbrace$
and
$B_2=\lbrace{\lbrace{u,v}\rbrace}\rbrace\cup\bigcup\limits_{i=1}^{k}\lbrace\lbrace{a_{2i-1},a_{2i}}\rbrace\rbrace\cup\bigcup\limits_{j=1}^{s}\lbrace\lbrace{b_{2j-1},b_{2j}}\rbrace\rbrace$.
We observe that $B_1\subseteq{M}$ and $B_2\cap{M}=\emptyset$. Let $M'$ be a
matching in $G$ given by $M'=(M\cup{B_2})\setminus{B_1}$. As
$M^{'}\cap{E(G\lbrace{v}\rbrace)}$ is a matching in $G\lbrace{v}\rbrace$, we
see that it does not saturate $v$, because
$v\notin{Supp(G\lbrace{v}\rbrace)}. $ Hence $M^{'}\cap{E(G\lbrace{v}\rbrace)}
\notin\mathcal{M}(G\lbrace{v}\rbrace)$ (see Lemma \ref{t11}). That is
$\vert{M^{'}\cap{E(G\lbrace{v}\rbrace)}\vert}<{\vert{M_1}\vert}$. We have
\begin{small}
\begin{eqnarray}\label{equa2}
\vert{M^{'}\cap{E(G\lbrace{v}\rbrace)}}\vert&=&\vert{((M\cup{B_2})\setminus{B_1})\cap{E(G\lbrace{v}\rbrace)}}\vert\nonumber\\
&=&\vert{((M_1\cup{M_2}\cup{B_2})\setminus{B_1})\cap{E(G\lbrace{v}\rbrace)}}\vert\nonumber\\
&=&\vert{((M_1\setminus{B_1})\cup{(M_2\setminus{B_1})}\cup{(B_2\setminus{B_1})})\cap{E(G\lbrace{v}\rbrace)}}\vert\nonumber\\
&=&\vert{((M_1\setminus{B_1})\cup{(M_2\setminus{B_1})}\cup{B_2})\cap{E(G\lbrace{v}\rbrace)}}\vert\nonumber\\
&=&\vert{((M_1\setminus{B_1})\cap{E(G\lbrace{v}\rbrace)})\cup({(M_2\setminus{B_1})}\cap{E(G\lbrace{v}\rbrace)})\cup({B_2}\cap{E(G\lbrace{v}\rbrace)})}\vert\nonumber\\
&=&\vert{((M_1\cap{E(G\lbrace{v}\rbrace)})\setminus{B_1})\cup({(M_2\cap{E(G\lbrace{v}\rbrace)})}\setminus{B_1})\cup({B_2}\cap{E(G\lbrace{v}\rbrace)})}\vert\nonumber\\
&=&\vert{(M_1\setminus{B_1})\cup({B_2}\cap{E(G\lbrace{v}\rbrace)})}\vert\nonumber\\
&=&\vert{(M_1\setminus{B_1})}\vert+\vert{({B_2}\cap{E(G\lbrace{v}\rbrace)})}\vert\nonumber\\
&=&\left\vert{M_1}\right\vert-\left\vert{M_1\cap{B_1}}\right\vert+\vert{({B_2}\cap{E(G\lbrace{v}\rbrace)})}\vert\nonumber\\
&=&\left\vert{M_1}\right\vert-\left\vert{\lbrace\lbrace{a_{2k},v}\rbrace\rbrace\cup\bigcup\limits_{i=1}^{k-1}\lbrace\lbrace{a_{2i},a_{2i+1}}\rbrace\rbrace}\right\vert+\left\vert{\bigcup\limits_{i=1}^{k}\lbrace\lbrace{a_{2i-1},a_{2i}}\rbrace\rbrace}\right\vert\nonumber\\
&=&\left\vert{M_1}\right\vert-k+k=\vert{M_1}\vert.
\end{eqnarray}
\end{small}
By equation \eqref{equa2} we have $\vert{M_1}\vert>\vert{M^{'} \cap
{E(G\lbrace{v}\rbrace)}}\vert=\vert{M_1}\vert$, which is a contradiction.

The case where $P$ contains the edge $\lbrace{w,v}\rbrace$ is  analogous.
Therefore $M\in\mathcal{M}(G)$.  Using Lemma \ref{t35} we have
\begin{eqnarray*}
\nu{(G\lbrace{v}\rbrace)}&=&\vert{Core(G\lbrace{v}\rbrace)}\vert{+}\frac{\vert{V(\mathcal{F}_N(G\lbrace{v}\rbrace))}\vert}{2}\mbox{ and}\\
\nu{(G-G\lbrace{v}\rbrace)}&=&\vert{Core(G-G\lbrace{v}\rbrace)}\vert{+}\frac{\vert{V(\mathcal{F}_N(G-G\lbrace{v}\rbrace))}\vert}{2}.
\end{eqnarray*}

Therefore, the matching number of $G$ is given by

\begin{eqnarray*}
\nu(G)&=&\vert{M_1}\vert+{\vert{M_2}\vert}
=\nu{(G\lbrace{v}\rbrace)}+\nu{(G-G\lbrace{v}\rbrace)}\\
&=&\vert{Core(G\lbrace{v}\rbrace)}\vert{+}\vert{Core(G-G\lbrace{v}\rbrace)}\vert{+}\frac{\vert{V(\mathcal{F}_N(G\lbrace{v}\rbrace))}\vert+\vert{V(\mathcal{F}_N(G-G\lbrace{v}\rbrace))}\vert}{2}.
\end{eqnarray*}
\end{proof}

As an example of Theorem \ref{propnu1}, consider  the unicyclic graph $G$ of
Figure \ref{figurmatch4}. To see that it is of Type $I$, we notice that
$v\notin{Supp(G\lbrace{v}\rbrace)=\lbrace{a,b,j}\rbrace}$, then by
Proposition \ref{t4}, we have $G$ is a unicyclic graph of Type $I$. Moreover,
$Supp(G-G\lbrace{v}\rbrace)=\lbrace{e,f,i}\rbrace$,
$Core(G\lbrace{v}\rbrace)=\lbrace{c}\rbrace$,
$V(\mathcal{F}_N(G\lbrace{v}\rbrace))=\lbrace{v,d,\ell,m}\rbrace$,
$Core(G-G\lbrace{v}\rbrace)=\lbrace{g,h}\rbrace$ and
$V(\mathcal{F}_N(G-G\lbrace{v}\rbrace))=\lbrace{o,n}\rbrace$.

\usetikzlibrary{shapes,snakes}
\tikzstyle{vertex}=[circle,draw,minimum size=5pt,inner sep=10pt]
\tikzstyle{edge} = [draw,thick,-]
\tikzstyle{matched edge} = [draw,snake=zigzag,line width=1pt,-]
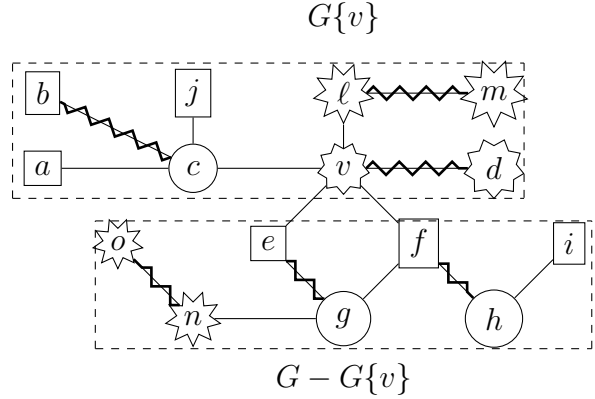
\begin{figure}[h!]
\begin{center}
\begin{tikzpicture}[scale=1,auto,swap]
\node[draw,circle,label=below left:] (1) at (-2,-1) {$c$};
\node[draw,rectangle,label=below left:] (2) at (-4,-1) {$a$};
\node[draw,rectangle,label=below left:] (3) at (-4,0) {$b$};
\node[draw,star,star points=9,star point ratio=0.75,label=below left:] (4) at (0,-1) {$v$};
\node[draw,star,star points=9,star point ratio=0.75,label=below left:] (5) at (2,-1) {$d$};
\node[draw,rectangle,label=below left:] (6) at (-1,-2) {$e$};
\node[draw,rectangle,label=below left:] (7) at (1,-2) {$f$};
\node[draw,circle,label=below left:] (8) at (0,-3) {$g$};
\node[draw,circle,label=below left:] (9) at (2,-3) {$h$};
\node[draw,rectangle,label=below left:] (10) at (3,-2) {$i$};
\node[draw,rectangle,label=below left:] (11) at (-2,0) {$j$};
\node[draw,star,star points=9,star point ratio=0.6,label=below left:] (12) at (0,0) {$\ell$};
\node[draw,star,star points=9,star point ratio=0.6,label=below left:] (13) at (2,0) {$m$};
\node[draw,star,star points=9,star point ratio=0.6,label=below left:] (14) at (-2,-3) {$n$};
\node[draw,star,star points=9,star point ratio=0.6,label=below left:] (15) at (-3,-2) {$o$};
\draw[dashed] (-3.3,-1.7) -- (3.3,-1.7);
\draw[dashed] (-3.3,-3.4) -- (3.3,-3.4);
\draw[dashed] (-3.3,-3.4) -- (-3.3,-1.7);
\draw[dashed] (3.3,-3.4) -- (3.3,-1.7);
\draw[dashed] (-4.4,-1.4) -- (2.4,-1.4);
\draw[dashed] (2.4,0.4) -- (2.4,-1.4);
\draw[dashed] (-4.4,0.4)-- (2.4,0.4);
\draw[dashed] (-4.4,-1.4)--(-4.4,0.4);
\node at (0,-3.8) {$G-G\lbrace{v}\rbrace$};
\node at (0,1) {$G\lbrace{v}\rbrace$};
\foreach \from/\to in {1/2,1/3,1/4,4/5,4/6,4/7,8/6,8/7,9/7,9/10,11/1,4/12,12/13,8/14,14/15} {
\draw (\from) -- (\to);}
\foreach \source / \dest in {1/3,4/5,12/13,14/15,6/8,7/9}
\path[matched edge] (\source) -- (\dest);
\end{tikzpicture}
\caption{Unicyclic graph of Type $I$ and support of subtrees.}\label{figurmatch4}
\end{center}
\end{figure}

\noindent Therefore, by Theorem \ref{propnu1}, we have that the matching
number of $G$ is given by
\begin{eqnarray*}
\nu(G)&=&\vert{Core(G\lbrace{v}\rbrace)}\vert{+}\vert{Core(G-G\lbrace{v}\rbrace)}\vert{+}\frac{\vert{V(\mathcal{F}_N(G\lbrace{v}\rbrace))}\vert+\vert{V(\mathcal{F}_N(G-G\lbrace{v}\rbrace))}\vert}{2}\\
&=&1+\frac{4}{2}+2+\frac{2}{2}=6.
\end{eqnarray*}

\noindent We  point out  that
$M=\lbrace{\lbrace{b,c}\rbrace,\lbrace{v,d}\rbrace,\lbrace{\ell,m}\rbrace,
\lbrace{n,o}\rbrace,\lbrace{e,g}\rbrace,\lbrace{f,h}\rbrace}\rbrace$ is a
maximum matching of $G$ and $\vert{M}\vert=6$.

We now present a similar result for the matching number of unicyclic graphs
$G$ of Type $II$.

\begin{teore}\label{propnu2}
Let $G$ be a unicyclic graph and $C$ its cycle. Let
$G-C=\bigcup\limits^{k}_{i=1}{T_i}$, where $T_i$ is a connected component of
$G-C$. If $G$ is a unicyclic graph of Type $II$ then
\begin{equation*}
\nu(G)=\floor*{\frac{\vert{V(C)}\vert}{2}}+\sum\limits^{k}_{i=1}\vert{Core(T_i)}\vert+\frac{\vert{V(\mathcal{F}_N(T_i))}\vert}{2}.
\end{equation*}
\end{teore}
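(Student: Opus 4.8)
The plan is to mirror the proof of Theorem \ref{propnu1}: exhibit an explicit matching of the predicted size and then use Berge's characterization (Lemma \ref{tmax}) to certify it is maximum. For each $i$, let $u_i\in V(T_i)$ be the unique vertex of $T_i$ adjacent to $C$, joined to the cycle vertex $v_i$ by the single edge $\{u_i,v_i\}$; this edge is unique because $G$ is unicyclic (a second attachment of $T_i$ to $C$ would create a second cycle). Since $G$ is of Type $II$, Corollary \ref{c3} gives $v_i\in Supp(G\lbrace{v_i}\rbrace)$, so Lemma \ref{non supp} applies and yields $u_i\notin Supp(T_i)$. By Lemma \ref{t11} this means $u_i\notin EG(T_i)$, i.e. \emph{every} maximum matching of $T_i$ saturates $u_i$. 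This is the fact that drives the whole argument.

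First I would fix a maximum matching $M_c\in\mathcal{M}(C)$ and, for each $i$, a maximum matching $M_i\in\mathcal{M}(T_i)$, and set $M=M_c\cup\bigcup_{i=1}^k M_i$. Because $M_c$ lives on $C$, each $M_i$ lives on $T_i$, and the only edges joining $C$ to the trees are the $\{u_i,v_i\}$ (none of which lies in $M$), $M$ is a matching of $G$ with $|M|=\nu(C)+\sum_i\nu(T_i)$; in particular $\nu(G)\ge\nu(C)+\sum_i\nu(T_i)$.

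The heart of the proof is to show $M\in\mathcal{M}(G)$. Suppose not; by Lemma \ref{tmax} there is an $M$-augmenting path $P$, whose two endpoints are $M$-unsaturated. The key structural observation is that each $T_i$ meets the rest of $G$ only through the cut edge $\{u_i,v_i\}$, so a simple path entering a pendant tree cannot leave it; hence any tree visited by $P$ must contain an endpoint of $P$. If $P$ uses no crossing edge it is contained in $C$ or in a single $T_i$, contradicting the maximality of $M_c$ or of $M_i$. Otherwise $P$ meets some tree $T_a$ and therefore has an endpoint $p\in V(T_a)$; tracing $P$ from $p$, it stays inside $T_a$ until it reaches $u_a$ and then crosses to $v_a$ via $\{u_a,v_a\}\notin M$. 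Along this $T_a$-portion the edge at $p$ is non-matching (as $p$ is unsaturated) while the edge at $u_a$ is matching (forced by the non-matching crossing edge and alternation), so the numbers of matching and non-matching edges of the portion are equal. Consequently, flipping $P$ produces $M'=M\triangle E(P)$ whose restriction $M'\cap E(T_a)$ has the same cardinality as $M_a$, namely $\nu(T_a)$, yet no longer saturates $u_a$. This contradicts $u_a\notin Supp(T_a)$ through Lemma \ref{t11}, since a matching of $T_a$ of maximum size must saturate $u_a$. Hence no augmenting path exists and $M$ is maximum, giving $\nu(G)=\nu(C)+\sum_i\nu(T_i)$.

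Finally I would substitute $\nu(C)=\floor*{\frac{|V(C)|}{2}}$ and, by Lemma \ref{t35}, $\nu(T_i)=|Core(T_i)|+\frac{|V(\mathcal{F}_N(T_i))|}{2}$ into this identity to obtain the stated formula. I expect the main obstacle to be the global control of $P$ in the previous paragraph: one must invoke unicyclicity to confine every tree-excursion of $P$ to an endpoint, after which the local alternation argument around $u_a$ delivers the contradiction exactly as in Theorem \ref{propnu1}. A cleaner route that bypasses augmenting paths is a direct upper bound: for an arbitrary maximum matching $M^{*}$, split $|M^{*}|$ into cycle edges, tree edges and crossing edges; each crossing edge used at $T_i$ forces $M^{*}\cap E(T_i)$ to avoid $u_i$, which costs at least one (because $u_i\notin Supp(T_i)$) and exactly cancels that crossing edge's contribution, whence $|M^{*}|\le\nu(C)+\sum_i\nu(T_i)$.
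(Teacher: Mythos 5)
Your proof is correct, and it certifies the maximality of $M$ by a different mechanism than the paper does. The paper also builds $M=M_c\cup\bigl(\bigcup_{v}M_v\bigr)$ and invokes Berge's lemma, but it works with the pendant trees $G\lbrace{v}\rbrace$ rather than directly with the components of $G-C$: using the Type $II$ hypothesis and Lemma \ref{t11} it chooses each $M_v\in\mathcal{M}(G\lbrace{v}\rbrace)$ so as to \emph{miss} the cycle vertex $v$, and then an augmenting path $P$ leaving a pendant tree yields an immediate contradiction, because the initial segment of $P$ from its $M$-unsaturated endpoint up to $v$ is an $M_v$-alternating path between two $M_v$-unsaturated vertices, i.e. already an $M_v$-augmenting path inside the tree $G\lbrace{v}\rbrace$; the identity $\nu(G\lbrace{v}\rbrace)=\nu(G\lbrace{v}\rbrace-v)$ then converts the count into the $T_i$'s at the very end. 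You instead take arbitrary maximum matchings $M_i$ of the forest components, import Lemma \ref{non supp} (which the paper uses only for the independence-number Theorem \ref{propalpha2}) to conclude that every maximum matching of $T_i$ saturates the attachment vertex $u_i$, and derive the contradiction by flipping $P$ and counting edges inside a single component, in the style of the computation \eqref{equa2} in Theorem \ref{propnu1}; your confinement argument (a simple path can enter or leave $T_a$ only through the cut edge $\lbrace{u_a,v_a}\rbrace$, so every visited tree holds an endpoint of $P$) and the parity bookkeeping at $u_a$ (first edge non-matching at the unsaturated endpoint, last tree edge matching by alternation with the non-matching crossing edge, hence equal counts and $\vert{M'\cap{E(T_a)}}\vert=\nu(T_a)$ with $u_a$ unsaturated) are all sound. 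Both routes work: the paper's choice of $v$-avoiding matchings makes the augmenting-path case analysis essentially a one-liner, while yours requires the counting at $u_a$ but states the matching directly on the $T_i$'s appearing in the formula. Your closing sketch of a direct upper bound $\vert{M^{*}}\vert\leq\nu(C)+\sum_{i}\nu(T_i)$ --- each tree admits exactly one crossing edge, and using it forces $M^{*}\cap{E(T_i)}$ to miss $u_i$, costing at least one edge by Lemma \ref{non supp} and Lemma \ref{t11} --- is also correct and arguably cleaner, since combined with your explicit $M$ it bypasses Berge's lemma entirely.
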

\begin{proof}
For each pendant tree $G\lbrace{v}\rbrace$, with $G\lbrace{v}\rbrace\neq{v}$,
choose an $M_v\in\mathcal{M}(G\lbrace{v}\rbrace)$ that does not saturate $v$. 
Note that this maximum matching exists because $G$ is a unicyclic graph of Type
$II$, and this implies that $v\in{Supp(G\lbrace{v}\rbrace)}$ for all
$v\in{V(C)}$. Hence, by Lemma \ref{t11}, we have that $M_v$ exists. Choose an
$M_c\in\mathcal{M}(C)$ and let $M=M_c\cup\left(\bigcup\limits_{v}M_v\right)$.

We will show that $M\in\mathcal{M}(G)$.
Suppose by contradiction that $M\notin\mathcal{M}(G)$. Then, by Lemma \ref{tmax} there exists a $M$-augmenting path denoted by $P$ in $G$. 
Note that if $V(P)\subseteq V(C)$ then $P$ would be a $M_c$-augmenting path in $C$ which is a contradiction 
because $M_c\in\mathcal{M}(C)$. 
Now, if $V(P)\subseteq V(G\{v\})$ then $P$ would be a $M_v$-augmenting path  in $G\lbrace{v}\rbrace$,
which is a contradiction because $M_v\in\mathcal{M}(G\lbrace{v}\rbrace)$.
 
Note that the only way to obtain an $M$-augmenting path $P$ in $G$ is if we
start the path $P$ at a vertex $u\in{V(G\lbrace{v}\rbrace)}$, with
$u\neq{v}$, and end at a vertex $z\notin V(G\{v\})$. If that happens, there would
be an $M_v$-alternating path starting in $u$ and ending in $v$ contained in $P$. But
since $M_v$ does not saturate $v$, actually we would obtain a $M_v$-augmenting path in $G\lbrace{v}\rbrace$,  which is a
contradiction, because $M_v\in\mathcal{M}(G\lbrace{v}\rbrace)$. Therefore,
$M\in\mathcal{M}(G)$.

\noindent By Lemma \ref{t35}, we have
$$\nu(T_i)=\vert{Core(T_i)}\vert+\frac{\vert{V(\mathcal{F}_N(T_i))}\vert}{2}.$$
Moreover, we have  $\nu(C)=\floor*{\frac{\vert{V(C)}\vert}{2}}$. Since $v$ is
not saturated by $M_v$ in $G\lbrace{v}\rbrace$, we see that
$M_v\in\mathcal{M}(G\lbrace{v}\rbrace-v)$, that is,
$\nu(G\lbrace{v}\rbrace)=\nu(G\lbrace{v}\rbrace-v)$. Therefore, we have that
the matching number is given by:
\begin{eqnarray*}
\nu(G)&=&\vert{M_c}\vert+\sum\limits_{G\lbrace{v}\rbrace\neq{v}}\vert{M_v}\vert
      =\nu(C)+\sum\limits_{G\lbrace{v}\rbrace\neq{v}}\nu(G\lbrace{v}\rbrace)\\
      &=&\nu(C)+\sum\limits_{G\lbrace{v}\rbrace\neq{v}}\nu(G\lbrace{v}\rbrace-v)
      =\nu(C)+\sum\limits^{k}_{i=1}\nu(T_i)\\
      &=&\floor*{\frac{\vert{V(C)}\vert}{2}}+\sum\limits^{k}_{i=1}\vert{Core(T_i)}\vert+\frac{\vert{V(\mathcal{F}_N(T_i))}\vert}{2}.
\end{eqnarray*}

\end{proof}

Consider $G$ the unicyclic graph of Figure \ref{fm3}. We see that $G$ is a
unicyclic graph of Type $II$, because
$a\in{Supp(G\lbrace{a}\rbrace)=\lbrace{a,j,l,m}\rbrace}$,
$b\in{Supp(G\lbrace{b}\rbrace)=\lbrace{b,h,g}\rbrace}$,
$c\in{Supp(G\lbrace{c}\rbrace)=\lbrace{c,o}\rbrace}$,
$d\in{Supp(G\lbrace{d}\rbrace)=\lbrace{d,u,v,w,t}\rbrace}$ and
$e\in{Supp(G\lbrace{e}\rbrace)=\lbrace{e}\rbrace}$. By Corollary \ref{c3}, we
have that $G$ is of Type $II$. Notice that
$G-C_5=\bigcup\limits_{i=1}^{4}T_i$, where
$T_1=G{\langle\lbrace{f,g,h}\rbrace\rangle}$,
$T_2=G{\langle\lbrace{n,o,p,q}\rbrace\rangle}$,
$T_3=G{\langle\lbrace{r,s,t,u,v,w}\rbrace\rangle}$ and
$T_4=G{\langle\lbrace{j,i,l,m}\rbrace\rangle}$  (see Figure \ref{fm3}). We
see that $T_2$ has perfect matching, then $T_2$ is non singular and so $T_2$
has empty support. Moreover, we have that $T_1$, $T_3$ and $T_4$ do not have
perfect matchings, then they are singular and their supports is given in
Table \ref{tab2}.

\begin{center}
\begin{table}[h]
\begin{tabular}{|l|l|l|}
  \hline
    Support & Core & $N$-vertices\\
  \hline
  $Supp(T_1)=\lbrace{h,g}\rbrace$     & $Core(T_1)=\lbrace{f}\rbrace$ & $V(\mathcal{F}_N(T_1))=\emptyset$ \\
  \hline
  $Supp(T_2)=\emptyset$       & $Core(T_2)=\emptyset$ & $V(\mathcal{F}_N(T_2))=\lbrace{n,o,p,q}\rbrace$\\
  \hline
  $Supp(T_3)=\lbrace{t,v,u,w}\rbrace$         & $Core(T_3)=\lbrace{r,s}\rbrace$ &  $V(\mathcal{F}_N(T_3)))=\emptyset$\\
  \hline
    $Supp(T_4)=\lbrace{j,l,m}\rbrace$ & $Core(T_4))=\lbrace{i}\rbrace$ &  $V(\mathcal{F}_N(T_4))=\emptyset$\\
  \hline
\end{tabular}
\caption{Support, core and $N$-vertices of the trees $T_1$, $T_2$, $T_3$ and $T_4$.}\label{tab2}
\end{table}
\end{center}
\usetikzlibrary{shapes,snakes}
\tikzstyle{vertex}=[circle,draw,minimum size=1pt,inner sep=1pt]
\tikzstyle{edge} = [draw,thick,-]
\tikzstyle{matched edge} = [draw,snake=zigzag,line width=1pt,-]
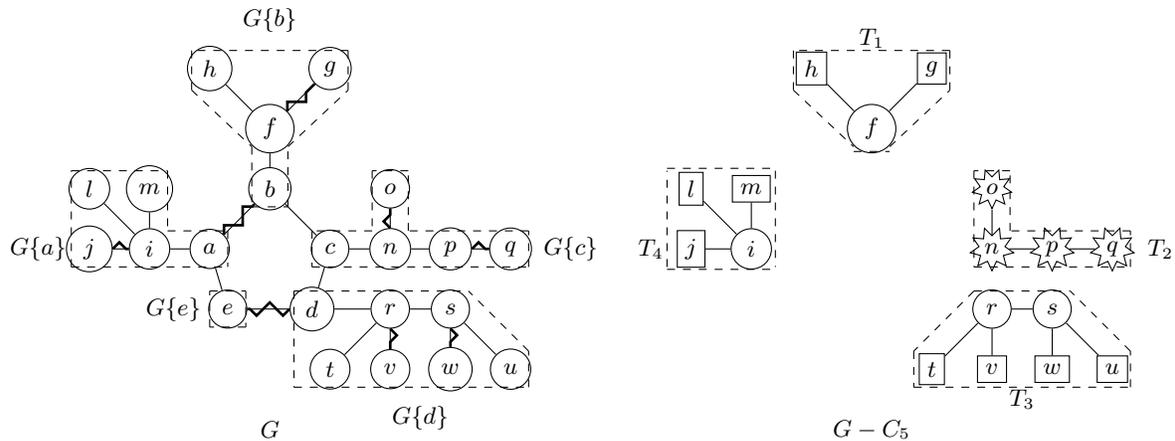
\begin{figure}[h!]
\begin{center}
\begin{scriptsize}
\begin{center}
\begin{tikzpicture}[scale=0.8,auto,swap]
\node[draw,circle,label=below left:] (1) at (0,0) {$a$};
\node[draw,circle,label=below left:] (2) at (1,1) {$b$};
\node[draw,circle,label=below left:] (3) at (2,0) {$c$};
\node[draw,circle,label=below left:] (4) at (1.7,-1) {$d$};
\node[draw,circle,label=below left:] (5) at (0.3,-1) {$e$};
\node[draw,circle,label=below left:] (6) at (1,2) {$f$};
\node[draw,circle,label=below left:] (7) at (2,3) {$g$};
\node[draw,circle,label=below left:] (8) at (0,3) {$h$};
\node[draw,circle,label=below left:] (9) at (-1,0) {$i$};
\node[draw,circle,label=below left:] (10) at (-2,0) {$j$};
\node[draw,circle,label=below left:] (11) at (-2,1) {$l$};
\node[draw,circle,label=below left:] (12) at (-1,1) {$m$};
\node[draw,circle,label=below left:] (13) at (3,0) {$n$};
\node[draw,circle,label=below left:] (14) at (3,1) {$o$};
\node[draw,circle,label=below left:] (15) at (4,0) {$p$};
\node[draw,circle,label=below left:] (16) at (5,0) {$q$};
\node[draw,circle,label=below left:] (17) at (3,-1) {$r$};
\node[draw,circle,label=below left:] (18) at (4,-1) {$s$};
\node[draw,circle,label=below left:] (19) at (2,-2) {$t$};
\node[draw,circle,label=below left:] (20) at (5,-2) {$u$};
\node[draw,circle,label=below left:] (21) at (3,-2) {$v$};
\node[draw,circle,label=below left:] (22) at (4,-2) {$w$};
\node[draw,circle,label=below left:] (23) at (11,2) {$f$};
\node[draw,rectangle,label=below left:] (24) at (12,3) {$g$};
\node[draw,rectangle,label=below left:] (25) at (10,3) {$h$};
\node[draw,circle,label=below left:] (26) at (9,0) {$i$};
\node[draw,rectangle,label=below left:] (27) at (8,0) {$j$};
\node[draw,rectangle,label=below left:] (28) at (8,1) {$l$};
\node[draw,rectangle,label=below left:] (29) at (9,1) {$m$};
\node[draw,star,star points=9,star point ratio=0.6,label=below left:] (30) at (13,0) {$n$};
\node[draw,star,star points=9,star point ratio=0.6,label=below left:] (31) at (13,1) {$o$};
\node[draw,star,star points=9,star point ratio=0.6,label=below left:] (32) at (14,0) {$p$};
\node[draw,star,star points=9,star point ratio=0.6,label=below left:] (33) at (15,0) {$q$};
\node[draw,circle,label=below left:] (34) at (13,-1) {$r$};
\node[draw,circle,label=below left:] (35) at (14,-1) {$s$};
\node[draw,rectangle,label=below left:] (36) at (12,-2) {$t$};
\node[draw,rectangle,label=below left:] (37) at (15,-2) {$u$};
\node[draw,rectangle,label=below left:] (38) at (13,-2) {$v$};
\node[draw,rectangle,label=below left:] (39) at (14,-2) {$w$};
\draw[dashed] (1.7,-0.3)--(5.3,-0.3);
\draw[dashed] (5.3,-0.3)--(5.3,0.3);
\draw[dashed] (5.3,0.3)--(3.3,0.3);
\draw[dashed] (3.3,0.3)--(3.3,1.3);
\draw[dashed] (3.3,1.3)--(2.7,1.3);
\draw[dashed] (2.7,1.3)--(2.7,0.3);
\draw[dashed] (2.7,0.3)--(1.7,0.3);
\draw[dashed] (1.7,0.3)--(1.7,-0.3);
\draw[dashed] (-2.3,-0.3)--(0.3,-0.3);
\draw[dashed] (-2.3,-0.3)--(-2.3,1.3);
\draw[dashed] (-2.3,1.3)--(-0.7,1.3);
\draw[dashed] (-0.7,1.3)--(-0.7,0.3);
\draw[dashed] (-0.7,0.3)--(0.3,0.3);
\draw[dashed] (0.3,0.3)--(0.3,-0.3);
\draw[dashed] (1.4,-0.7)--(4.3,-0.7);
\draw[dashed] (1.4,-0.7)--(1.4,-2.3);
\draw[dashed] (1.4,-2.3)--(5.3,-2.3);
\draw[dashed] (5.3,-2.3)--(5.3,-1.7);
\draw[dashed] (5.3,-1.7)--(4.3,-0.7);
\draw[dashed] (0.7,0.7)--(1.3,0.7);
\draw[dashed] (1.3,0.7)--(1.3,1.7);
\draw[dashed] (1.3,1.7)--(2.3,2.64);
\draw[dashed] (2.3,2.64)--(2.3,3.3);
\draw[dashed] (2.3,3.3)--(-0.3,3.3);
\draw[dashed] (-0.3,3.3)--(-0.3,2.64);
\draw[dashed] (-0.3,2.64)--(0.7,1.7);
\draw[dashed] (0.7,1.7)--(0.7,0.7);
\draw[dashed] (0,-0.7)--(0.6,-0.7);
\draw[dashed] (0.6,-0.7)--(0.6,-1.3);
\draw[dashed] (0.6,-1.3)--(0,-1.3);
\draw[dashed] (0,-1.3)--(0,-0.7);
\node at (1,-3) {$G$};
\node at (11,-3) {$G-C_5$};
\node at (1,3.8) {$G\lbrace{b}\rbrace$};
\node at (6,0) {$G\lbrace{c}\rbrace$};
\node at (-2.85,0) {$G\lbrace{a}\rbrace$};
\node at (-0.6,-1) {$G\lbrace{e}\rbrace$};
\node at (3.5,-2.8) {$G\lbrace{d}\rbrace$};
\draw[dashed] (12.7,-0.3)--(15.3,-0.3);
\draw[dashed] (15.3,-0.3)--(15.3,0.3);
\draw[dashed] (15.3,0.3)--(13.3,0.3);
\draw[dashed] (13.3,0.3)--(13.3,1.3);
\draw[dashed] (13.3,1.3)--(12.7,1.3);
\draw[dashed] (12.7,1.3)--(12.7,-0.3);
\draw[dashed] (7.6,-0.34)--(9.4,-0.34);
\draw[dashed] (7.6,-0.3)--(7.6,1.3);
\draw[dashed] (7.6,1.34)--(9.4,1.34);
\draw[dashed] (9.4,1.3)--(9.4,-0.3);
\draw[dashed] (12.7,-0.7)--(11.7,-1.7);
\draw[dashed] (12.7,-0.7)--(14.3,-0.7);
\draw[dashed] (11.7,-1.7)--(11.7,-2.3);
\draw[dashed] (11.7,-2.3)--(15.3,-2.3);
\draw[dashed] (15.3,-2.3)--(15.3,-1.7);
\draw[dashed] (15.3,-1.7)--(14.3,-0.7);
\draw[dashed] (10.7,1.62)--(11.3,1.62);
\draw[dashed] (11.3,1.7)--(12.3,2.64);
\draw[dashed] (12.3,2.64)--(12.3,3.3);
\draw[dashed] (12.3,3.3)--(9.7,3.3);
\draw[dashed] (9.7,3.3)--(9.7,2.64);
\draw[dashed] (9.7,2.64)--(10.7,1.7);
\node at (11,3.5) {$T_1$};
\node at (15.8,0) {$T_2$};
\node at (7.33,0) {$T_4$};
\node at (13.5,-2.5) {$T_3$};
\foreach \from/\to in {1/2,2/3,3/4,4/5,5/1,6/2,23/24,23/25,35/37,35/39,30/31,30/32,32/33,26/27,26/28,26/29,6/7,6/8,9/1,9/10,9/11,9/12,13/14,13/3,13/15,15/16,4/17,17/18,17/19,17/21,18/20,18/22,34/35,34/36,34/38} {
 \draw (\from) -- (\to);}
 \foreach \source / \dest in {10/9,6/7,13/14,15/16,17/21,18/22,1/2,4/5}
   \path[matched edge] (\source) -- (\dest);
\end{tikzpicture}
\caption{Unicyclic graph of Type $II$ and its subtrees $T_1$, $T_2$, $T_3$ and $T_4$.}\label{fm3}
\end{center}
\end{scriptsize}
 \end{center}
\end{figure}

\noindent Therefore, by Theorem \ref{propnu2}, we have that the matching
number of $G$ is given by:
\begin{eqnarray*}
\nu(G)&=&\floor*{\frac{\vert{V(C)}\vert}{2}}+\sum\limits^{k}_{i=1}\vert{Core(T_i)}\vert+\frac{\vert{V(\mathcal{F}_N(T_i))}\vert}{2}.\\
&=&\floor*{\frac{5}{2}}+1+\frac{4}{2}+2+1=8.
\end{eqnarray*}

\noindent we point out that
$M=\lbrace{\lbrace{a,b}\rbrace,\lbrace{d,e}\rbrace,\lbrace{i,j}\rbrace,\lbrace{f,g}\rbrace,\lbrace{p,q}\rbrace,\lbrace{n,o}\rbrace,\lbrace{r,v}\rbrace,\lbrace{s,w}\rbrace}\rbrace$
is a maximum matching of $G$ and $\vert{M}\vert=8$.

\section*{Acknowledgments} Work supported by MATHAMSUD 18-MATH-01.
Maikon Toledo thanks CAPES for their support. V. Trevisan acknowledges
partial support of CNPq grants 409746/2016-9 and 303334/2016-9,
 and FAPERGS (Proj.\ PqG 17/2551-0001).

\end{document}